\renewcommand\atop[2]{\genfrac{}{}{0pt}{}{#1}{#2}}
\newtheorem{theorem}{Theorem}
\newtheorem{lemma}{Lemma}
\newtheorem{prop}{Proposition}
\newtheorem{defin}{Definition}
\theoremstyle{remark}
\newtheorem*{rem}{Remark}
\newenvironment{proof}{\noindent{\bf Proof:}}{$\hfill \Box$ \vspace{10pt}}  
\def\XXint#1#2#3{{\setbox0=\hbox{$#1{#2#3}{\int}$ }
\vcenter{\hbox{$#2#3$ }}\kern-.6\wd0}}
\newcommand{\abs}[1]{\left|#1\right|}
\newcommand{\norm}[1]{\left\|#1\right\|}
\newcommand{\with}{\quad\!\hbox{with}\!\quad}
\newcommand{\andf}{\quad\!\hbox{and}\!\quad}
\newcommand{\R}{\mathbb{R}}
\newcommand{\N}{\mathbb{N}}
\newcommand\1{\mathbbm{1}}
\newcommand{\cC}{\mathcal{C}}
\newcommand{\cS}{\mathcal{S}}
\renewcommand{\div}{{\rm div}\,}
\newcommand{\BMO}{\mathrm{BMO}^{-1}}
\begin{document}
\title[Density-dependent incompressible NS equations in tent spaces]{Density-dependent incompressible Navier--Stokes equations in critical tent spaces}
\author{R. Danchin, I. Vasilyev}
\address{Univ Paris Est Creteil, Univ Gustave Eiffel, CNRS, LAMA UMR8050, F-94010 Creteil, France
and Sorbonne Universit\'e, LJLL UMR 7598, 4 Place Jussieu, 75005 Paris, France
}
\address{St.-Petersburg Department of V.A. Steklov Mathematical Institute, Russian Academy of Sciences (PDMI RAS), Fontanka 27, St.-Petersburg, 191023, Russia}
\email{danchin@u-pec.fr}
\email{ivasilyev@pdmi.ras.ru}
\subjclass[2010]{76D05, 35Q30}
\keywords{Navier--Stokes equations, variable density, $\BMO$ space, tent spaces}
\begin{abstract}
In this paper we prove the existence of global solutions to the inhomogeneous incompressible Navier--Stokes equations, whenever the initial velocity is small enough in some subspace of $\mathrm{BMO}^{-1}$ and the initial density is sufficiently close to $1$ in the uniform metric. This is a natural extension to the variable density
case of the celebrated result by H. Koch and D. Tataru
\cite{kochtataru} concerning the classical Navier--Stokes equations.
\end{abstract}

\maketitle
\section*{Introduction}
The present paper is concerned with the construction of global-in-time solutions to 
 the following  inhomogeneous (or density-dependent) incompressible Navier--Stokes equations
\begin{equation}\label{inhom}
\begin{cases}
\partial_t(\rho u) + \div(\rho u \otimes u)  -\Delta u + \nabla P=0\qquad&\hbox{in }\ \R^3\times \R_+\\\
\partial_t \rho +\div(\rho u)=0\qquad&\hbox{in }\  \R^3\times \R_+\\
\mathrm{div}\, u=0\qquad&\hbox{in }\ \R^3\times \R_+
\end{cases}
\end{equation}
supplemented with initial data $(\rho_0,u_0)$ at time $t=0.$
\smallbreak
These equations govern the evolution of the density $\rho=\rho(x,t) \geqslant 0,$ the velocity $u=u(x,t)\in\R^3$ and the pressure 
$P=P(x,t)\in\R$  of viscous incompressible flows with variable density in the whole space $\R^3.$
They are  a toy model for describing  mixtures of incompressible homogeneous fluids (in which case it is natural to consider discontinuous density), or pollutants.
\smallbreak
Since the pioneering work by J. Leray \cite{Leray} in 1934, a huge mathematical literature has been dedicated to 
 the case with constant density, namely 
\begin{equation}\label{ns}
\begin{cases}
\partial_t u + \div(u \otimes u)  -\Delta u + \nabla P=0\qquad&\hbox{in }\   \R^3\times \R_+\\
\mathrm{div}\,u =0\qquad&\hbox{in }\  \R^3\times \R_+\\
u|_{t=0} = u_0\qquad&\hbox{in }\ \R^{3}.
\end{cases}
\end{equation}
J. Leray established that any divergence-free square integrable 
initial velocity field $u_0$ generates at least one
 global-in-time distributional solution of \eqref{ns} (that he called \emph{turbulent solution}) satisfying the energy inequality:
 $$\frac12\|u(t)\|_{L^2}^2+\int_0^t\|\nabla u(\tau)\|_{L^2}^2\,d\tau\leqslant \frac12\|u_0\|_{L^2}^2,\qquad t\geqslant 0.$$
He also pointed out that  smoother data give rise to smoother global and unique solutions  if 
$\|u_0\|_{L^2}\|\nabla u_0\|_{L^2}$ is small enough.  At the same time, he left open the question of global existence of a smooth solution in the case of large data.
\medbreak
In 1964, H. Fujita and T. Kato \cite{FK} observed that solving \eqref{ns} may be reformulated in terms of a fix point problem, namely it is formally equivalent to
\begin{equation}\label{nsfix}
u(t)=e^{tA} u_0
-\int_0^te^{(t-\tau)A}(u\cdot\nabla u)(\tau)\,d\tau,\qquad t>0,
\end{equation}
   where  $(e^{tA})_{t>0}$ denotes the Stokes semi-group.
   \smallbreak   
        In the case of three dimensional domains, by taking advantage of the classical fixed point theorem in complete metric spaces, they established the existence and uniqueness of a unique global-in-time solution whenever the (divergence-free) initial velocity $u_0$ has $H^{\frac{1}{2}}$ Sobolev regularity. It is by now well understood that when the fluid domain is the whole space, the optimal framework to solve \eqref{ns} by means of the fixed point theorem 
        must be sought among the Banach spaces $X$ with the same scaling invariance as \eqref{ns}, that is, satisfying for all $\lambda>0,$
        \begin{equation}\label{eq:critical}
        \|u_\lambda\|_X\simeq \|u\|_X\with u_\lambda(x,t):=\lambda u(\lambda x,\lambda^2t).\end{equation}
        Thus, a\emph{critical space} $X_0$ for initial data must have the scaling invariance
          \begin{equation}\label{eq:critical0}
        \|u_0(\lambda\cdot)\|_{X_0}\simeq \|u_0\|_{X_0}.\end{equation}
        This approach based on the contractive fixed point theorem
        gives local-in-time well-posedness results for large 
        divergence-free initial velocities  with critical regularity (that is, belonging to a space $X_0$ satisfying
        \eqref{eq:critical0}), and global  well-posedness if $\|u_0\|_{X_0}$ is   small enough. 
After Fujita and Kato’s article, a number of critical spaces in which the Navier-Stokes equations can be solved by this method 
have been discovered, in particular all homogeneous Besov spaces $\dot B^{-1+\frac{3}{p}}_{p,r}$ with $p<\infty$
(see e.g. \cite[Chap. 5]{BCD}).

  The largest critical  Banach space continuously embedded in the set of tempered distributions that is translation invariant is the Besov space $\dot B^{-1}_{\infty,\infty}$
 (see e.g. \cite[Prop. 5.31]{BCD}), and one can wonder if \eqref{ns} is well-posed for any $u_0$ in it. 
 In 2008,  J. Bourgain and N. Pavlovic answered in the negative: the Navier-Stokes equations
 are ill-posed in $\dot B^{-1}_{\infty,\infty}$ \emph{in the sense of norm inflation}, meaning that there exist
 smooth data, that can be arbitrarily small in $\dot B^{-1}_{\infty,\infty}$  and for which 
 $\|u(t)\|_{\dot B^{-1}_{\infty,\infty}}$ is   arbitrarily large  after an arbitrarily short time (see \cite{BP} and also \cite{check}). 
Before that, H.~Koch and D.~Tataru showed in \cite{kochtataru} that 
 \eqref{nsfix} may be solved by fixed point for small enough divergence-free $u_0$ with components in the space 
 \begin{equation}\label{eq:BMO-1}
  \BMO:= \Bigl\{f\in\cS'(\R^3)\: : \:  \underset{\atop{r>0}{x_0 \in \mathbb R^3}}{\sup} \, \Bigl( \frac{1}{r^{3}} \int_0 ^{r^2}\!\! \int_{B(x_0, r)} 
  |e^{t\Delta} f(y)|^2 \, dy\,dt \Bigr)^{\frac {1}{2}} < \infty\Bigr\},
\end{equation}
with $(e^{t\Delta})_{t>0}$ denoting the heat semi-group.
\smallbreak
The space  $\BMO$ contains all the critical  homogeneous Besov spaces $\dot B^{-1+\frac{3}{p}}_{p,r}$ with $1\leqslant p<\infty,$ $1\leqslant r\leqslant\infty$
and is, so far, the largest known functional space for which the fixed point method may be implemented for solving
\eqref{ns}. 
\medbreak
The present work aims to address the question of solvability of \emph{inhomogeneous} Navier--Stokes equations \eqref{inhom}
in the case where the initial velocity belongs to a critical space in the spirit of $\BMO$ and  the initial density is  close to $1$ in $L^\infty.$
This  assumption on the data is in accordance with the scaling invariance of the equations, namely
\begin{equation}\label{eq:scaling}
(\rho,u,P)(x,t)\leadsto (\rho,\lambda u,\lambda^2P)(\lambda x,\lambda^2t),\qquad \lambda>0.
\end{equation}
We know many critical spaces in which we can solve \eqref{inhom} for all time if the data are small. For example, we have:
\begin{itemize}
\item the well-posedness for $(\rho_0-1,u_0)\in  \dot B^{\frac{3}{2}}_{2,1}\times \dot B^{\frac{1}{2}}_{2,1}$ (see \cite{dan1});
\item more generally, the well-posedness if 
$(\rho_0-1,u_0)\in \dot B^{\frac{3}{p}}_{p,1}\times \dot B^{\frac{3}{p}-1}_{p,1}$ 
 with  $1\leqslant p<3$ (see the paper by H. Abidi and M. Paicu \cite{AP}, 
later extended to the whole range $1\leqslant p<6$ in \cite{dan8});
 \item the well-posedness fo $u_0\in \dot B^{\frac{3}{p}-1}_{p,r}$ with 
 arbitrarily large $p$ and $r$ if the density is at least H\"older continuous, see the work by B. Haspot in \cite{Haspot}.
 \end{itemize}
 To our knowledge, the question of the solvability of \eqref{inhom} for $u_0$ only in $\BMO$ and $\rho_0$ with no regularity 
 has not yet been addressed. 
In the present paper, we will take a step toward resolving this issue by considering $u_0$ in new critical regularity spaces 
whose definition is inspired by that of $\BMO.$  
As for the initial density $\rho_0,$ it is not required to be smooth: it just
has to be sufficiently close to $1$ in the uniform metric. 
\medbreak


\section{The main result}

Before stating our main result, introducing the functional setting is in order.
Let us first specify the space $U_\alpha$ for initial data. 
Let $\alpha\in(0,1)$ and let $\Psi$ be a nonzero function in the Schwartz space $\cS(\R^n)$ with zero average and satisfying 
$|\widehat\Psi(\xi)|>0$ whenever 
$1/2<|\xi|<2$. Put $\Psi_t(x):=t^{-n}\Psi(x/t)$. We set for any tempered distribution $f,$
$$\|f\|_{ U_{\alpha}}:= \underset{\atop{r>0}{x_0 \in \mathbb R^n}}{\sup} \, \Bigl( \frac{1}{r^{n-2 \alpha - 2 }} \int_0 ^{r} \int_{B(x_0, r)} 
|(\Psi_t\ast f)(y)|^2 \, t^{-1-2\alpha} \, dy\,dt \Bigr)^{\frac {1}{2}}.$$
A key example of functions $\Psi_t$ is  $\Psi_t:=t \partial_t  \Phi_{t^2}$
where  $\Phi_t$ stands for the heat kernel in $\mathbb R^n$,  namely
\begin{equation}\label{eq:Phi}
\Phi_t (x) = \frac{e^{{\frac{-|x|^2}{4t}}}}{(4\pi t)^{\frac{n}{2}}}\cdotp\end{equation}
It will be shown in Appendix (see Proposition \ref{p:density}) that if, in addition, $\alpha\leq -1+ n/2$ then, any Schwartz function $f$
satisfies $\|f\|_{U_\alpha}<\infty.$
This motivates the following definition of $U_\alpha.$
\begin{defin}
\label{tratatatatata}
Assume that $\alpha\in(0,-1+n/2]\cap(0,1)$.
Then, we define $U_\alpha$ to be the completion of the Schwartz  space $\cS(\R^n)$ 
for the norm $\|\cdot\|_{U_\alpha}.$
\end{defin}
 By Theorem 3.2 of Liang, Sawano, Ullrich, Yang and Yuan~\cite{liang2012new}, with
$$
s=\alpha,\qquad p=q=2,\qquad
\tau=\frac{n-2\alpha-2}{2n},
$$
on Schwartz functions the corresponding continuous local-means norm is an equivalent norm of their Morrey--Triebel--Lizorkin space $\dot F^{\alpha,\tau}_{2,2}(\mathbb R^n)$
and, in particular, is independent, up to equivalence, of the particular admissible function $\Psi$.  Since balls and cubes of comparable radii give equivalent norms, this result applies to the definition of $U_\alpha$ above. 

In view of the aforementioned theorem, we identify $U_\alpha$ with the closure of $\mathcal S(\mathbb R^n)$ in the homogeneous Triebel–Lizorkin-type space $\dot F^{\alpha,(n-2-2\alpha)/(2n)}_{2,2}(\mathbb R^n),$
which is naturally realized as a space of distributions modulo polynomials in $\mathcal S'(\mathbb R^n)/\mathcal P$. Whenever a concrete representative is needed, we choose the normalized representative that satisfies
$e^{t\Delta}f\to 0$ in $\mathcal S'(\mathbb R^n)$ as $t\to\infty.$
\medbreak
Observe that since, in the above definition, integration with respect to the $t$ variable is restricted to $0\leqslant t\leqslant r,$ we have the continuous embedding $U_\alpha\hookrightarrow U_{\alpha'}$
whenever $\alpha'\leqslant\alpha,$ and the space $U_{-1}$  looks like ${\rm BMO}^{-1}.$
\medbreak
Our space $U_\alpha$ is scaling invariant in the sense of \eqref{eq:critical0} (see
Proposition \ref{prop1} in the Appendix).  It also resembles  the space $Q^\alpha_{\frac{n}{\alpha+1},2}(\mathbb R^n)$ of the family $Q^\alpha_{p,q}(\mathbb R^n)$ (first introduced in~\cite{sibir}) defined as follows: a measurable function
$f:\R^n\to\R$ is in $Q^\alpha_{p,q}(\mathbb R^n)$ if
\begin{equation}\label{eq:Q}\sup_I\frac{1}{|I|^{1-\frac{q}{p}}}\int_I\int_I \frac{|f(x)-f(y)|^q}{|x-y|^{n+q\alpha}}\, dx dy<\infty,
\end{equation}
where the supremum is taken over all cubes $I$ in $\mathbb R^n$. See~\cite{hernya} for the properties of those spaces and references 
therein for previous important works on them. 
Our spaces $U_\alpha$ are also closely related to those
that have been used by J. Xiao in~\cite{Xiao} to study the classical  Navier--Stokes equations.
Their use in the density dependent case  seems to be totally new, though.
\medbreak
Before giving the definition of the solution space for the velocity field, introducing several
families of \emph{tent spaces} is in order. 
Let us first recall the definition of the classical parabolic tent spaces.
\begin{defin}\label{palatprvo}
 Let $p\in[1,\infty).$ A measurable function $u: \R^n\times\R_+\rightarrow \mathbb R$ belongs to the $(\infty,p)$-parabolic tent space $\mathbb T^{\infty,p}( \R^n\times\R_+)$ if
$$\|u\|_{\mathbb T^{\infty,p}( \R^n\times\R_+)}:=
\underset{\atop{r>0}{x_0 \in \mathbb R^n}}{\sup} \biggl(\frac{1}{r^n}\int_0^{r^2}\!\!\int_{B(x_0,r)}|u(y,t)|^p\, dy\, dt\biggr)^{\frac1p}<\infty.$$
\end{defin}
Tent spaces, along with convex integration (see e.g.~\cite{buck}) and dual evolution techniques (see e.g.~\cite{kisnaz} and~\cite{vasvin}) are examples of important tools, originating outside PDE, that proved to be useful to tackle challenging problems related to the Navier-Stokes equations.

 To our knowledge, \emph{elliptic} tent spaces were first introduced in the article~\cite{cms}. They have been studied
 in the subsequent papers \cite{aus-1,aus,aus1,aus2}. The parabolic counterpart of these spaces  (see the above definition) was  considered and successfully applied
  by P. Auscher and D. Frey in~\cite{aus}   to solve the homogeneous Navier--Stokes equations \eqref{ns} with initial data in $\BMO$ 
(see also~\cite{notes}).   Compared to the original article~\cite{kochtataru} by H. Koch and D. Tataru, 
using tent spaces turned out to allow one to handle more general parabolic systems with, possibly, 
rough coefficients. For more on tent spaces, see the recent papers~\cite{ben1} and~\cite{ben2}.
\medbreak
In order to state our main result, we need
another two families of tent spaces.
The first (resp. second) one is the space in which the first (resp. second) order space derivatives of the 
velocity are expected
to be, if one starts with $u_0$ in $U_\alpha.$
 \begin{defin}
 \label{prostranstoreshenij}
 Let $\beta \in \mathbb R$. 
   We denote by  $\mathbf{T}^{\infty, 2} (t^\beta dy dt)$
 the space of functions $u$ in $L^2_{loc}(\R^{n}\times\R_+;\R)$
such that  
$$\|u\|_{\mathbf{T}^{\infty, 2} (t^\beta dy dt)}:=\underset{\atop{r>0}{x_0 \in \mathbb R^n}}{\sup} \biggl(\frac{1}{r^{n+2\beta-2}} \int_0^{r^2} \!\!\int_{B(x_0,\,r)} \big| u(y,t) \big|^2 t^\beta dy \, dt \biggr)^\frac{1}{2} < \infty.$$
 We denote by  $T^{\infty, 2} (t^\beta dy dt)$
 the space of functions $u$ in $L^2_{loc}( \R^n\times\R_+;\R)$
such that  
$$\|u\|_{T^{\infty, 2} (t^\beta dy dt)}:=\underset{\atop{r>0}{x_0 \in \mathbb R^n}}{\sup} \biggl(\frac{1}{r^{n+2\beta-4}} \int_0^{r^2} \!\!\int_{B(x_0,\,r)} \big| u(y,t) \big|^2 t^\beta dy \, dt \biggr)^\frac{1}{2} < \infty.$$
 \end{defin}
 The same notation  will be used for \emph{vector valued} functions.
   \medbreak
To finish with, we set for  any measurable function $z: \R^n\times\R_+\to\R$, 
$$\|z\|_X:=\underset{\atop{r>0,\, x_0 \in \mathbb R^n}{0<t<r^2}}{\sup} r^{1-\frac n2}
\|z(t)\|_{L^2(B(x_0,r))}$$ and denote by $X$ the corresponding function space.
\medbreak
One can  now introduce the solution space for the velocity.
\begin{defin}\label{def:E}
 Let $\alpha \in \mathbb R$. We  denote by 
 $E_\alpha$ the set of time-dependent solenoidal vector fields $u$ with components in $X$ such that, in addition, 
$$\displaylines{ u\in L^\infty(\R_+;\dot B^{-1}_{\infty,\infty}),\quad
\partial_t u,\nabla^2 u\in T^{\infty,2}(t^{1-\alpha}dy dt)\andf \nabla u\in \mathbf{T}^{\infty,2}(t^{-\alpha}dy dt).
}$$
 \end{defin}
It is now time to state the main result of this paper:
\begin{theorem}
\label{key} Let $\alpha\in(0,1/2]$.
 There exists a positive number $\varepsilon_0$ such that  if 
the data $(\rho_0,u_0)$ (with divergence-free $u_0$)
satisfy  
 \begin{equation}\label{eq:smalldata}
 \|u_0\|_{U_\alpha}+\|\rho_0-1\|_{L^\infty(\R^{3})}\leqslant \varepsilon_0,\end{equation}
  then there exist global-in-time distributional solutions $(\rho,u,P)$ to the system~\eqref{inhom} with 
$u$ in the space $E_\alpha$ and  $\rho$ such that
    $$\|\rho-1\|_{L^\infty( \R^n\times\R_+)}  \leqslant \|\rho_0-1\|_{L^\infty(\R^{n})}.$$   \end{theorem}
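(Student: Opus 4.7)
The plan is to construct the solution via a contraction-mapping argument for the velocity field alone, based on the observation that, once a sufficiently regular $u$ is known, the density $\rho$ is recovered from the transport equation without loss in its $L^\infty$-norm. Setting $a:=\rho^{-1}-1$, the momentum equation rewrites as
\begin{equation*}
\partial_t u-\Delta u+\nabla P=-u\cdot\nabla u-a(\Delta u-\nabla P),
\end{equation*}
and, after applying the Leray projector $\mathbb{P}$ and the Duhamel formula, becomes a fixed-point equation
\begin{equation*}
u(t)=e^{t\Delta}u_0+\Phi_a(u)(t),\qquad t>0,
\end{equation*}
in which $\Phi_a$ gathers the nonlinear self-interaction $-\int_0^t e^{(t-\tau)\Delta}\mathbb{P}\,\mathrm{div}(u\otimes u)\,d\tau$ together with the density-perturbation term $-\int_0^t e^{(t-\tau)\Delta}\mathbb{P}\bigl(a(\Delta u-\nabla P)\bigr)\,d\tau$, the pressure $\nabla P$ being itself recovered from $u$ and $a$ through the divergence-free constraint.

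The first analytic step is to verify the linear bound $\|e^{t\Delta}u_0\|_{E_\alpha}\lesssim\|u_0\|_{U_\alpha}$, which should be essentially tautological from the definitions of the two spaces (the trace space $U_\alpha$ being tailored to the evolution space $E_\alpha$), mirroring what happens in~\cite{kochtataru}. Next, one proves a bilinear estimate for the Oseen operator,
\begin{equation*}
\Bigl\|\int_0^t e^{(t-\tau)\Delta}\mathbb{P}\,\mathrm{div}(u\otimes v)(\tau)\,d\tau\Bigr\|_{E_\alpha}\lesssim\|u\|_{E_\alpha}\|v\|_{E_\alpha},
\end{equation*}
which is the classical cornerstone of the Koch-Tataru theory and should adapt to the range $\alpha\in(0,1)$ with only moderate technical adjustments.

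The genuinely new content lies in controlling the density-perturbation contribution. Taking the divergence of the momentum equation under $\mathrm{div}\,u=0$ yields the elliptic problem for the pressure,
\begin{equation*}
-\mathrm{div}(\rho^{-1}\nabla P)=\mathrm{div}(u\cdot\nabla u)-\mathrm{div}(a\,\Delta u),
\end{equation*}
which, for $\|a\|_{L^\infty}$ small, can be inverted by a Neumann series as a small perturbation of the Laplacian in the relevant functional setting. Combining this with mapping properties of $\mathbb{P}$ and of the heat convolution in $E_\alpha$, the target is an estimate of the form
\begin{equation*}
\|\Phi_a(u)\|_{E_\alpha}\lesssim\|a\|_{L^\infty}\|u\|_{E_\alpha}+\|u\|_{E_\alpha}^2,
\end{equation*}
the linear factor $\|a\|_{L^\infty}$ being essential in order to absorb the density-perturbation term by smallness. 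I expect the main obstacle here: in the constant-density setting $\mathbb{P}$ annihilates the pressure outright, whereas in the inhomogeneous case pressure genuinely couples $u$ and $a$ through variable-coefficient elliptic operators, and tent-space bounds for terms such as $\mathbb{P}(a\,\nabla P)$ in a space tailored to $\mathrm{BMO}^{-1}$-type initial data are far from routine.

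Once a small solution $u\in E_\alpha$ is obtained by contraction in a small ball, the density is recovered by solving the transport equation $\partial_t\rho+u\cdot\nabla\rho=0$ with datum $\rho_0$. The assumption $\alpha>0$ should encode enough extra H\"older-type regularity of $u$ for a flow map to be meaningful (typically through a bound for $\int_0^t\|\nabla u(\tau)\|_{L^\infty}\,d\tau$ or a weaker log-Lipschitz analogue), and divergence-freeness of $u$ then propagates the $L^\infty$-norm of $\rho-1$ without growth, yielding the announced bound. Smallness of $\varepsilon_0$ is used simultaneously to close the contraction in $E_\alpha$ and to guarantee invertibility of the variable-coefficient elliptic problem governing the pressure.
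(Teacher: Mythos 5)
Your scheme has two genuine gaps, and the paper's proof is organized precisely to avoid both. First, by dividing the momentum equation by $\rho$ you place the density perturbation on the term $a(\Delta u-\nabla P)$ with $a=\rho^{-1}-1$, so the Leray projector no longer removes the pressure and you are forced into the variable-coefficient elliptic problem $-\div(\rho^{-1}\nabla P)=\dots$ and into tent-space bounds for $\mathbb P(a\,\nabla P)$ --- exactly the step you yourself flag as ``far from routine'' and never carry out. The paper does not divide by $\rho$: it keeps $\rho\,\partial_t u$, writes the equation as $\partial_tu-\Delta u+\nabla P=-\rho\,u\cdot\nabla u-a\,\partial_tu$ with $a=\rho-1$, and applies $\mathbb P$, which kills $\nabla P$ outright. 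The perturbation term is then $\mathbb P(a\,\partial_tu)$, estimated trivially by $\|a_0\|_{L^\infty}\|\partial_tu\|_{T^{\infty,2}(t^{1-\alpha}dy\,dt)}$; since the $E_\alpha$ norm contains precisely this maximal-regularity component (Lemmas \ref{lemodin}, \ref{lemdva}, \ref{lempiat} and the bounds \eqref{eq:key1}--\eqref{eq:key3}), the term is absorbed by smallness of $\|\rho_0-1\|_{L^\infty}$. This structural choice --- no pressure estimate at all, absorption through $\partial_tu$ --- is the key idea your proposal is missing.

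Second, your final step ``recover $\rho$ by solving the transport equation along the flow of $u$'' does not work at the claimed regularity: membership of $u_0$ in $U_\alpha$ (even with $\alpha>0$) does \emph{not} give $\nabla u\in L^1_{loc}(\R_+;L^\infty)$ or a log-Lipschitz substitute, as the paper explicitly remarks, which is also why uniqueness is left open; so the flow map, and hence your contraction in $u$ alone (where $a$ secretly depends on $u$ through transport), is not even well defined. The paper circumvents this by a compactness scheme: regularize $u_0$ into divergence-free $H^1$ data with uniform $U_\alpha$ bound (Appendix), invoke known global solvability for bounded density and $H^1$ velocity (Paicu--Zhang--Zhang in 2D; in 3D a local solution of Danchin--Mucha continued globally via an $H^1$ energy bound that uses the bilinear estimate \eqref{eq:bilinear} and the uniform $\dot B^{-1}_{\infty,\infty}$ control), prove the uniform a priori bound $\|u^m\|_{E_\alpha}\leqslant C(\|u_0\|_{U_\alpha}+\|u^m\|_{E_\alpha}^2)$, and pass to the limit weak-$*$; the density bound $\|\rho-1\|_{L^\infty}\leqslant\|\rho_0-1\|_{L^\infty}$ holds for the smooth approximations and survives the limit. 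Your linear and bilinear heat/Oseen estimates in $E_\alpha$ are in the spirit of the paper's lemmas, but as written the proposal leaves its two central difficulties (pressure in tent spaces, transport for non-Lipschitz $u$) unresolved, so it does not constitute a proof.
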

\begin{rem} Having $u$ in $E_\alpha$  does not guarantee  
  that $\nabla u$ belongs to $L^1_{loc}(\R_+;L^\infty).$ 
  Due to the partially hyperbolic nature of \eqref{inhom}, this makes the uniqueness 
  issue rather challenging if  the density has no smoothness 
  (see  the work by B. Haspot in \cite{Haspot} and the recent preprint \cite{HNSWZZ}
  for the case where $\rho$ has some H\"older regularity). 
  \end{rem}
  
  Our choice of the spaces $U_\alpha$ is motivated by their similarity with the space ${\rm BMO}^{-1}$
  defined in \eqref{eq:BMO-1}. They are connected with  the family of $Q_\alpha^\beta(\R^n)$ spaces that have been first introduced 
in \cite{EJPX} and used to investigate the well or ill-posedness
issues of the homogeneous incompressible Navier--Stokes equations with fractional Laplacian  in \cite{LZ} and \cite{WX}. 

Compared to the constant density case, the difficulty here is that the system under consideration
is not fully parabolic. This led us to consider a new scale of tent spaces.
\medbreak
We conclude this section with a brief explanation of the proof. Essentially, we have to control the velocity field in the space $E_\alpha$ in terms of $\|u_0\|_{U_\alpha}$ in the case it is  small enough  in $U_\alpha$ and $\rho_0$ is close  to $1.$ 
Since the density control is trivial, this consists in  showing that:
\begin{enumerate}
\item[1.]  the free solution of the non-stationary Stokes system associated with an initial velocity in $U_\alpha$ indeed belongs to $E_\alpha$; 
\item[2.]  the nonlinear terms in the velocity equation can be controlled in the tent space  $T^{\infty,2}(t^{1-\alpha}dy dt)$; 
\item[3.]  the solution of the Stokes system with a source term in $ T^{\infty,2}(t^{1-\alpha}dy dt)$ is  in $E_\alpha$.
\end{enumerate}
We shall establish beforehand the continuity of the Leray projector onto $ T^{\infty,2}(t^{1-\alpha}dy dt)$. This  will allow us to consider the heat equation rather than the Stokes system in the above steps. Verifying all these points will be the subject of the following section. 
In section \ref{s:existence}, we will rigorously prove the existence of a global solution satisfying the properties of the main theorem. 
In appendix,  we demonstrate some auxiliary results which have been of importance in our paper.
\medbreak
Throughout this paper, the signs $\lesssim$ and $\gtrsim$ indicate that the left-hand (right-hand) side of an inequality is less than the right-hand (left-hand) side multiplied by a `harmless' constant. The sign $\asymp$ means that we have both bounds $\lesssim$ and $\gtrsim$ simultaneously.


\section{The key inequalities}

This section is devoted to proving the key inequalities that lead to our main theorem. 
It is divided into four parts. In the first three parts, we establish linear estimates. 
In the last one,  we  handle the convection term in the velocity equation of \eqref{inhom}.

\subsection{The Riesz operators in tent spaces}

The following result  ensures that the Leray projector maps the space
$T ^{\infty,\, 2} \bigl( t^{\beta}dy dt \bigr)$ to itself in any dimension $n.$  It  will enable us 
to deduce the boundedness of $\nabla^2u$ in this space from that of $\Delta u,$ and 
to reduce the proof of estimates for the evolutionary Stokes system 
to the corresponding ones for the heat equation. 
\begin{lemma}\label{lempiat} 
 Let   $\beta <2.$  In any dimension $n,$ the Riesz transforms $R_1,\dots,R_n$ are bounded in the tent space $T ^{\infty, 2} \bigl( t^{\beta}  dy  dt \bigr)\cdotp$
\end{lemma} \begin{proof}
It suffices to prove the result for $R_1.$  Consider a function $u \in T^{\infty, 2} \bigl( t^\beta dy dt \bigr).$ Take a point $x_0 \in \mathbb R^n$ and $r>0.$ What we need to estimate is
\begin{align*}
I^2 &:= \int_0^{r^2}\!\!\int_{B(x_0,\, r)} \big| R_1 u(y,t) \big|^2 t^\beta \, dy \, dt  \\
&=\int_0^{r^2} \!\!\int_{B(x_0,\, r)} \Big| \sum_{i \geqslant 0} \int_{F_i} u(x,t) \frac{y_1 - x_1}{\left| y - x\right|^{n+1}} dx  \Big|^2 t^\beta \, dy \, dt\\  
&=\int_0^{r^2}\!\! \int_{B(x_0,\, r)} \Big| \sum_{i \geqslant 0} \varphi_i (y,t) \Big|^2 \, t^\beta \, dy dt
\quad\hbox{with }\  \varphi_i(x,t):=\int_{F_i} u(x,t) \frac{y_1 - x_1}{\left| y - x\right|^{n+1}} \,dx,
\end{align*}
where we have denoted $F_0:=B(x_0,2r)$ and $F_i := B(x_0, 2^{i+1} r) \backslash B(x_0, 2^i r)$ for $i \geqslant 1.$ 
On top of that, the Minkowski inequality yields
$$I \leqslant \sum_{i \geqslant 0} \biggl( \int_0^{r^2}\!\!\int_{B(x_0,\, r)} \left|  \varphi_i (y,t) \right|^2 \, t^\beta \, dy \, dt \biggr)^\frac{1}{2}\cdotp$$
On the one hand, since $R_1$ is a bounded operator on $L^2(\R^n),$
we have
\begin{align*}
\int_0^{r^2} \!\!\int_{B(x_0,\, r)} \left|  \varphi_0 (y,t) \right|^2 \, t^\beta \, dy \, dt  &=  \int_0^{r^2} \| R_1 \bigl( \mathbbm{1}_{B(x_0, \, 2r)} \, u(\cdot , t) \bigr) \|_{L^2(B(x_0, \, r))}^2\, t^\beta  \, dt  \\
&\leqslant \int_0^{r^2} \| R_1 \bigl( \mathbbm{1}_{B(x_0, \, 2r)} \, u(\cdot , t) \bigr) \|_{L^2 (\mathbb R^n)}^2\, t^\beta  \, dt\\ 
&\lesssim \int_0^{r^2}\!\! \int_{B(x_0,\,2r)}\left|u(y,t) \right|^2 t^\beta \, dy \, dt.
\end{align*}

On the other hand, if $i \geqslant 1,$ then we use 
the fact that $|y-x|\geq 2^{i-1}r$ for all $x\in F_i$ and $y\in B(x_0,r)$
and the Cauchy--Schwarz inequality in the integral with respect to $x$ to infer:
\begin{align*}
\int_0^{r^2}\!\! \int_{B(x_0,\,r)}\left|\varphi_i (y,t) \right|^2 t^\beta \, dy \, dt 
&= \int_0^{r^2}\!\! \int_{B(x_0,\,r)}\left|\int_{F_i} u(x,t) \frac{y_1 - x_1}{\left|y-x \right|^{n+1}} \,dx \right|^2 t^\beta \, dy \, dt \\
&\lesssim\int_0^{r^2}\!\! \int_{B(x_0,\,r)} \bigl(2^i r\bigr)^{-n} 
\int_{F_i} |u(x,t)|^2  \,dx \  t^\beta dy \, dt  \\
&\lesssim  \int_0^{r^2}\!\! \int_{B(x_0,\,2^{i+1} r)} 2^{-in} \left| u(x,t)\right|^2 t^\beta \,dx\, dt.
\end{align*}
Putting  the estimates above together yields  
\begin{align*}
I &\leqslant \sum_{i \geqslant 0} \Biggl( \int_0^{r^2} \!\!\int_{B(x_0,\,2^i r)} 2^{-in} \left| u(x,t)\right|^2 t^\beta dx\, dt \Biggr)^\frac{1}{2} \\
&\lesssim \sum_{i \geqslant 0} 2^{-\frac{in}{2}} \bigl(2^i r\bigr)^{\frac{n}{2} +\beta -2} \Biggl( \frac{1}{{(2^i r)}^{n+2\beta -4}} \int_0^{{(2^i r)}^2} \!\!\int_{B(x_0,\,2^i r)}  \left| u(x,t)\right|^2 t^\beta dx\, dt \Biggr)^\frac{1}{2} \\ &\lesssim r^{\frac{n}{2}+\beta-2} \cdot \| u \|_{T^{\infty, 2}(t^\beta dy dt)} \cdot \sum_{i \geqslant 0} 2^{i(\beta -2)} \\
&\lesssim r^{\frac{n}{2}+\beta-2} \| u \|_{T^{\infty, 2}(t^\beta dy dt)},
\end{align*}
and the lemma follows.
 \end{proof}


\subsection{Estimates for the free solution of the heat equation}

The goal of this part is to establish the following result. 
\begin{lemma}\label{lemodin}   
Assume that  $u_0  \in U_\alpha$ for some $\alpha \in (0, \, 1).$ Then, $u_L:=e^{t\Delta}u_0$ satisfies 
\begin{multline}\label{eq:uL}
\sup_{t>0} \|u_L(t)\|_{\dot B^{-1}_{\infty,\infty}} +\|(\partial_t u_L,\nabla^2 u_L)\|_{T^{\infty,2} (t^{1-\alpha} dy\, dt)}\\+\|u_L\|_X+\|\nabla u_L\|_{\mathbf{T}^{\infty, 2} (t^{-\alpha} dy dt)}\lesssim \|u_0\|_{U_\alpha}.\end{multline}
Furthermore, we have 
\begin{equation}\label{eq:uLbis}
\sup_{t>0} t^{\frac12}\|u_L(t)\|_{L^\infty}\lesssim \|u_0\|_{U_\alpha}.\end{equation}
\end{lemma}
\begin{proof}
Inequality  \eqref{eq:uLbis} is only a matter of scaling: since 
$U_\alpha$ satisfies \eqref{eq:critical} (see Prop. \ref{prop1} in the Appendix), 
 and is invariant by translations, it is continuously embedded in $\dot B_{\infty, \infty}^{-1}$ (see \cite[Prop. 5.31]{BCD}). 
 Recall that the norm in $\dot B^{-1}_{\infty,\infty}$ can be  characterized  in terms of the heat flow  as follows (see \cite[Thm. 2.34]{BCD}):
 \begin{equation}\label{eq:char}
 \sup_{t>0} t^{\frac\sigma2}\| e^{t\Delta} a\|_{L^\infty(\R^n)}\simeq \|a\|_{\dot B^{-\sigma}_{\infty,\infty}},\quad\sigma>0. 
 \end{equation}
 This gives \eqref{eq:uLbis}. Similarly, due to \eqref{eq:char}, we have
\begin{align*}
\sup_{t>0}\|u_L(t)\|_{\dot B^{-1}_{\infty,\infty}}&=
\sup_{s,t>0} s^{\frac12}  \|e^{(t+s)\Delta}u_0\|_{L^\infty}\\
&\leq \sup_{s,t>0} (s+t)^{\frac12}  \|e^{(t+s)\Delta}u_0\|_{L^\infty}
=\|u_0\|_{\dot B^{-1}_{\infty,\infty}}\lesssim \|u_0\|_{U_\alpha}.\end{align*}
  To bound the second term of Inequality \eqref{eq:uL},
 fix $x_0$ in $\mathbb R^n$ and $r>0.$  Using the short notation $B_r:=B(x_0,r),$
and observing that $u_L(t)=\Phi_t\ast u_0,$ we get
  \begin{align*}
J:&= \frac{1}{r^{n-2\alpha-2}} \int_0^{r^2}\!\! \int_{B_r} \big| \partial_t  \bigl(\Phi_t \ast u_0 \bigr) (y) \big|^2 \, t^{1-\alpha} dy\,dt\\
&=\frac{1}{r^{n-2\alpha-2}} \int_0^{r^2} \!\!\int_{B_r} \big| (\partial_t  \Phi_t) \ast u_0 (y) \big|^2 \, t^{1-\alpha} dy\,dt \\
&=\frac{1}{r^{n-2\alpha-2}} \int_0^r\!\! \int_{B_r} 
\big| (s\partial_s  \Phi_{s^2}) \ast u_0 (y) \big|^2 \, s^{-1-2\alpha} dy\,ds
\leqslant \|u_0\|_{U_\alpha}^2,
\end{align*}
where the second equality above follows from a change of variables. Hence
we are done, thanks to Theorem 3.2 in~\cite{liang2012new} which asserts that the space  $U_\alpha$ does not depend on the specific choice of the mollifier $\Psi$ in Definition~\ref{tratatatatata} (here we choose
$\Psi_s:=s\partial_s\Phi_{s^2}$).
Because $\Delta u_L=\partial_t u_L$ and $\partial_i\partial_j u_L =-R_i R_j \Delta u_L$
with $R_i$ being the Riesz transform with respect to the variable $x_i,$
Lemma \ref{lempiat} guarantees that $\nabla^2 u_L$ also satisfies the desired estimate. 
\medbreak
Next, in order to bound $\|u_L\|_X,$  we note that, for any $0<t<r^2,$ we have
\begin{equation}\label{eq:uL1}u_L(t)=u_L(r^2)-\int_t^{r^2} \partial_s u_L(s)\,ds.\end{equation}
Hence, we have for all $x_0\in\R^n$,
$$\|u_L(t)\|_{L^2(B(x_0,r))}\leqslant \|u_L(r^2)\|_{L^2(B(x_0,r))}
+\int_t^{r^2}\|\partial_s u_L\|_{L^2(B(x_0,r))}\,ds.$$
From H\"older Inequality and Inequality \eqref{eq:uLbis}, we find that
\begin{equation}\label{eq:uL2}\|u_L(r^2)\|_{L^2(B(x_0,r))}\lesssim r^{\frac n2}\|u_L(r^2)\|_{L^\infty}\leqslant  r^{\frac n2-1} \|u_0\|_{U_\alpha}.
\end{equation}
In light of the Cauchy-Schwarz inequality and of the bound on $\partial_tu_L$
given by \eqref{eq:uL}, we have for $\alpha>0,$
\begin{align*}\int_t^{r^2}\|\partial_s u_L\|_{L^2(B(x_0,r))}\,ds&\leqslant \biggl(\int_0^{r^2}\!\!\int_{B(x_0,r)}|\partial_s u_L|^2
s^{1-\alpha}\,dy ds\biggr)^{\frac12}\biggl(\int_0^{r^2}s^{\alpha-1}\,ds\biggr)^{\frac12}\\
&\lesssim r^{\frac n2-1} \|u_0\|_{U_\alpha},\end{align*}
which, together with \eqref{eq:uL1} and \eqref{eq:uL2}, and the definition of $\|\cdot\|_X$  implies that
$$\|u_L\|_X\lesssim \|u_0\|_{U_\alpha}.$$ 
Let us finally bound $\nabla u_L.$ 
 Let $\partial_k$ denote $\partial/\partial x_k$ for $k$ from $1$ to $n .$ Recall that $\Phi_t$ stands for the heat kernel in $\mathbb R^n$ and write as follows:
\begin{equation*}
\int_0^{r^2} \!\!\!\int_{B_r} \big| \nabla (\Phi_t \ast  u_0) (y)\big|^2 \, t^{-\alpha} \, dy dt 
= \sum_{k=1}^n \int_0^r\!\! \int_{B_r} \big| (\tau \partial_k \Phi_{\tau^2}) \ast u_0 (y)\big|^2 \, \tau^{-1-2\alpha} \, dy d\tau.
\end{equation*}
Note that all  the functions 
$$\eta_k (\tau, \, x):= \tau \, \partial_k \Phi_{\tau^2} =\tau\, \partial_k \Bigl(\pi^{-n} \, \tau^{-n} \, e^{- \frac{|x|^2}{\tau^2}}\Bigr) = \tau^{-n} \pi^{-n} e^{- \frac{|x|^2}{\tau^2}} \Bigl(\frac{-2x_k}{\tau}\Bigr)$$ 
satisfy $\eta_k (\tau, x) = \tau^{-n}\, \eta_k (x/\tau)$ where $\eta_k (z) := \pi^{-n} e^{-|z|^2} (-2 z_k),$ have a mean value $0$ and are such that $|\eta_k (z)| \lesssim (1+|z|)^{-M}$
for all $M\in\N.$ 
Consequently, one can take advantage of  Theorem 3.2 in~\cite{liang2012new}
and conclude  that
\begin{align*}
\int_0^{r^2} \!\!\!\int_{B_r} \left| \nabla \bigl( \Phi_t \ast u_0\bigr) (y) \right|^2 t^{-\alpha }\, dy\,dt &= \sum_{k=1}^n \int_0^r\!\! \int_{B_r} \left| \eta_k \ast u_0(y,t) \right|^2 t^{-1-2\alpha} dy \, dt \\ 
&\lesssim r^{n-2\alpha-2} \|u_0\|_{U_\alpha}^2.
\end{align*}
This completes the proof of the lemma. 
\end{proof}


\subsection{Estimating the Duhamel term}

Let $v$ be the solution of the heat equation with null initial data
and source term $f,$  namely
\begin{equation}\label{eq:v}
v(t):=\int_0^te^{(t-s)\Delta} f(s)\,ds,\qquad t\in\R_+. 
\end{equation}
We want to prove  that $v$ belongs to  $E_\alpha$
whenever $f$ is in the tent space  $T^{\infty, 2} (t^{1-\alpha} dy dt)$ for some $\alpha\in(0,1).$  
\smallbreak
Let us first  show the continuity on  $T^{\infty, 2} (t^{1-\alpha} dy dt)$ 
of the following maximal regularity operator $M_+$ acting on functions 
$f: \R^n\times\R_+\to\R$:
 $$M_+ f(y,t) := \int_0^t \Delta e^{(t-s) \Delta} f(y,s)\, ds.$$ 
\begin{lemma}
\label{lemdva} For any $\beta<1$ and dimension $n\geqslant 1,$ we have 
$$M_+:  T^{\infty, 2} (t^\beta dy dt) \rightarrow  T^{\infty, 2} (t^\beta dy dt).$$ 
\end{lemma}
\begin{proof}
It is inspired by that of Theorem 3.2 of~\cite{aus2} (see also~\cite{notes}).
 Fix some $f \in T^{\infty, 2}(t^\beta dy\, dt),$ 
then  $x_0 \in \mathbb R^n$ and $r>0.$ Let $B_r:=B(x_0,r).$ We want to estimate 
$$I := \biggl(\int_{B_r} \int_0^{r^2} \big| M_+ f(y,t) \big|^2 t^\beta dt \, dy\biggr)^{1/2}\cdotp$$
For any $j\geqslant 0,$ denote
$$I_j := \biggl(\int_{B_r} \int_0^{r^2} \big| M_+ f_j (y,t) 
\big|^2 t^\beta dt \, dy\biggr)^{1/2},$$
where
$$f_0(x,t) := f(x,t) \cdot \mathbbm{1}_{B(x_0, \, 2r)}(x)\cdot \mathbbm{1}_{[0, \, r^2]}(t)$$ 
and if $j \geqslant 1,$ 
$$f_j(x,t):= f(x,t)\cdot \mathbbm{1}_{B(x_0, \, 2^{j+1}r) \backslash B(x_0,\, 2^j r)}(x) \cdot \mathbbm{1}_{[0, \, r^2]}(t).$$ 
Owing to the Minkowski inequality, we have $$I \leqslant \sum\limits_{j \geqslant 0}  I_j.$$ 
To handle $I_0,$ we deduce from the fact that $M_+$ is bounded  on $L^2 (\mathbb R^{n}\times\R_+, t^\beta dt \, dy)$  when $\beta <1$ (by the de Simon theorem in \cite{DeSimon}) that 
 \begin{align*}
I_0^2 &= \int_{B_r}\int_0^{r^2} \big|M_+f_0(y,t) \big|^2 t^\beta dy\,dy \\   
&\leqslant\int_{\mathbb R^n} \int_0^{\infty} \big| M_+ f_0 (y,t)\big|^2 t^\beta dt\,dy \lesssim \int_{B_r} \int_0^{r^2} \big| f (y,t)\big|^2 t^\beta dt\,dy.
\end{align*}
Next, for all $j\geqslant 1,$ we have
\[M_+f_j(t)=
\int_{t/2}^t \Delta e^{(t-s)\Delta}f_j(s)\,ds+
\sum_{k\geqslant1}\int_{2^{-k-1}t}^{2^{-k}t}
\Delta e^{(t-s)\Delta}f_j(s)\, ds.\]
Hence, by Minkowski's inequality,
\[I_j\leqslant I_{j,0}+\sum_{k\geqslant 1}I_{j,k},\]
where
\[I_{j,0}:=\left(
\int_0^{r^2}\left\|\int_{t/2}^t\Delta e^{(t-s)\Delta}f_j(s) ds
\right\|_{L^2(B_r)}^2t^\beta dt\right)^{1/2}\]
and, for $k\geqslant 1$,
\[
I_{j,k}
:=
\left(
\int_0^{r^2}
\left\|
\int_{2^{-k-1}t}^{2^{-k}t}
\Delta e^{(t-s)\Delta}f_j(s) ds
\right\|_{L^2(B_r)}^2
t^\beta dt
\right)^{1/2}.
\]
To estimate the near-diagonal term $I_{j,0},$  we leverage 
the following off-diagonal estimate that holds true for all $N>0$:
\[\left\|
\Delta e^{(t-s)\Delta}f_j(s)\right\|_{L^2(B_r)}
\lesssim K_j^N(t-s)
\|f_j(s)\|_{L^2}\with
K_j^N(u):=u^{-1}\left(1+\frac{(2^jr)^2}{u}
\right)^{-N}.\]
Clearly, for all $0<t\leqslant r^2$, we have
\[\int_0^{t/2}K_j^N(u)du
\leqslant
(2^jr)^{-2N}
\int_0^{t/2}u^{N-1}du
\lesssim
(2^jr)^{-2N}t^N
\lesssim 2^{-2jN}.
\]
Hence using convolution inequality and the fact that $t^\beta\simeq s^\beta$
for $s\in[t/2,t]$ gives
\[I_{j,0}
\lesssim
2^{-2jN}
\left(
\int_0^{r^2}
\|f_j(s)\|_{L^2}^2s^\beta ds
\right)^{1/2}.
\]
We now estimate $I_{j,k}$ for $k\geqslant 1$. By the Cauchy--Schwarz inequality with respect to $s$,
\[
\begin{aligned}
I_{j,k}^2
&\leqslant
\int_0^{r^2}
\int_{2^{-k-1}t}^{2^{-k}t}
\left\|
(t-s)\Delta e^{(t-s)\Delta}f_j(s)
\right\|_{L^2(B_r)}^2
ds \;2^{-k}t^{\beta-1}\, dt.
\end{aligned}
\]
We thus obtain
\[
\begin{aligned}
I_{j,k}^2
&\lesssim
\int_0^{r^2}
\int_{2^{-k-1}t}^{2^{-k}t}
2^{-k}t^{\beta-1}\left(1+\frac{(2^jr)^2}{t-s}
\right)^{-2N}
\|f_j(s)\|_{L^2}^2\, ds dt.
\end{aligned}
\]
For $k\geqslant1$ and
\[
2^{-k-1}t\leqslant s\leqslant2^{-k}t,
\]
we have
\[
t-s\simeq t.
\]
Changing the order of integration therefore gives
\[
\begin{aligned}
I_{j,k}^2
&\lesssim
\int_0^{2^{-k}r^2}
\int_{2^ks}^{2^{k+1}s}
2^{-k}t^{\beta-1}
\left(
1+\frac{(2^jr)^2}{t-s}
\right)^{-2N}
\|f_j(s)\|_{L^2}^2
 \,dt ds \\
&\lesssim
2^{-4jN}r^{-4N}
\int_0^{2^{-k}r^2}
2^{-k}\|f_j(s)\|_{L^2}^2
\left(
\int_{2^ks}^{2^{k+1}s}
t^{\beta+2N-1} dt
\right)ds.
\end{aligned}
\]
Consequently,
\[
\begin{aligned}
I_{j,k}^2
&\lesssim
2^{-4jN}
\int_0^{2^{-k}r^2}
2^{-k}(2^ks)^{\beta+2N}
r^{-4N}
\|f_j(s)\|_{L^2}^2 ds \\
&\lesssim
2^{-k(1-\beta)}2^{-4jN}
\int_0^{r^2}
\|f_j(s)\|_{L^2}^2s^\beta ds,
\end{aligned}
\]
where we used $2^ks\leqslant r^2$. Hence
\[
I_{j,k}
\lesssim
2^{-k(1-\beta)/2}2^{-2jN}
\left(
\int_0^{r^2}
\|f_j(s)\|_{L^2}^2s^\beta ds
\right)^{1/2}.
\]
Since $\beta<1$, the series with respect to $k$ is convergent. Combining the estimates for $I_{j,0}$ and $I_{j,k}$, we obtain
\[
I_j
\lesssim
2^{-2jN}
\left(
\int_0^{r^2}
\|f_j(s)\|_{L^2}^2s^\beta ds
\right)^{1/2}.
\]
As $f_j$ is supported in
$B(x_0,2^{j+1}r)$, the definition of the tent-space norm yields
\[
\begin{aligned}
I_j
&\lesssim
2^{-2jN}
(2^{j+1}r)^{\frac{n+2\beta-4}{2}}
\|f\|_{T^{\infty,2}(t^\beta,dy,dt)} \\
&\lesssim
2^{-j\left(2N-\frac{n+2\beta-4}{2}\right)}
r^{\frac{n+2\beta-4}{2}}
\|f\|_{T^{\infty,2}(t^\beta dy dt)}.
\end{aligned}
\]
Choosing
\[
2N>\frac{n+2\beta-4}{2}
\]
ensures summability with respect to $j$. Together with the estimate of $I_0$, this gives
\[
I
\lesssim
r^{\frac{n+2\beta-4}{2}}
\|f\|_{T^{\infty,2}(t^\beta dy dt)},
\]
which completes  the proof of Lemma~\ref{lemdva}.
\end{proof}
\medbreak
Since $v$ defined in \eqref{eq:v} satisfies $\partial_t v=f+\Delta v$, 
 Lemmas \ref{lempiat} and \ref{lemdva} ensure that 
whenever $\alpha>0,$ we have 
\begin{equation}\label{eq:duhatent}
\|(\partial_tv,\nabla^2v)\|_{T^{\infty,2}(t^{1-\alpha}\,dydt)}\lesssim \|f\|_{T^{\infty,2}(t^{1-\alpha}\,dydt)}.\end{equation}
Next, let us establish the boundedness of $v$ in the space $X.$
\begin{lemma}\label{l:X} In any dimension $n$ and for all  $\alpha\in(0,1),$
    the function $v$ defined in \eqref{eq:v} satisfies:
    \begin{align} \label{eq:X1}
        \|v\|_{X}&\lesssim \|f\|_{T^{\infty,2}(t^{1-\alpha}dydt)}.\end{align}    
\end{lemma}
\begin{proof} Since $v(0)=0,$ we can write that
$$ v(t)=\int_0^t\partial_s v(s)\,ds.$$
We deduce that  for all $r>0$ and $x_0\in\R^2,$ setting $B_r:=B(x_0,r),$ we have by
Minkowski and Cauchy-Schwarz inequalities,
\begin{align*}\|v(t)\|_{L^2(B_r)}&\leqslant\int_0^t\|\partial_s v(s)\|_{L^2(B_r)}\,ds\\
&\leqslant\biggl(\int_0^ts^{\alpha-1}\,ds\biggr)^{\frac12}
\biggl(\int_0^t\|\partial_s v(s)\|_{L^2(B_r)}^2 s^{1-\alpha}\,ds\biggr)^{\frac12}\\
&\lesssim t^{\frac\alpha2}\biggl(\int_0^t\|\partial_s v(s)\|_{L^2(B_r)}^2 s^{1-\alpha}\,ds\biggr)^{\frac12}\cdotp
\end{align*}
Multiplying both sides by $r^{1-\frac n2}$ then 
taking the supremum on all $x_0\in\R^n$
and $0<t<r^2,$ we conclude that
\begin{equation}\label{eq:vX}\|v\|_X\lesssim \|\partial_s v\|_{T^{\infty,2}(t^{1-\alpha}\,dydt)}.\end{equation}
As $\partial_s v=f+\Delta v,$ using Lemma \ref{lemdva} completes the proof.\end{proof}
\medbreak
Let us next show that $f\in T^{\infty,2}(t^{1-\alpha}\,dydt)$ implies that $v\in L^\infty(\R_+;\dot B^{-1}_{\infty,\infty}).$
\begin{lemma}\label{l:besov}  Let $\alpha\in(0,1).$ In dimension $1\leqslant n\leqslant4,$ we have 
$$\|v\|_{L^\infty(\R_+;\dot B^{-1}_{\infty,\infty})} \lesssim \|f\|_{T^{\infty,2}(t^{1-\alpha}dy\,dt)}.$$
\end{lemma}
\begin{proof}
In light of \eqref{eq:char},   it is enough to prove that, uniformly in
\(t>0\), \(\sigma>0\), and \(x\in\R^n\), we have
\begin{equation}\label{eq:Besov-Duhamel-target-detailed}
  \sigma^{1/2}\abs{e^{\sigma\Delta}v(t,x)}  \lesssim  \norm{f}_{T^{\infty,2}(t^{1-\alpha}dy\,dt)} .
\end{equation}
Set, for all   $0<s<t$ and $\sigma>0,$ 
 \[ \lambda_s:=t-s+\sigma \andf   R:=(t+\sigma)^{1/2}.\]
We have
\[  e^{\sigma\Delta}v(t,x)=  \int_0^t\int_{\R^3}\Phi_{\lambda_s}(x-y)f(s,y)\,dy\,ds .\]
Recall  the standard  heat kernel bound:
\begin{equation}\label{eq:oseen-bound-detailed}
  \abs{\Phi_\lambda(z)} \lesssim  \lambda^{-n/2}  \left(1+\frac{\abs{z}}{\sqrt\lambda}\right)^{-n},
  \qquad \lambda>0,\ z\in\R^n.
\end{equation}
For fixed $x\in\R^n$ and $t,\sigma>0,$ we decompose $\R^n$ into the union of the ball 
$ A_0:=B(x,R)$ and of the following annuli centered at \(x\):
\[  A_j:=B(x,2^{j+1}R)\setminus B(x,2^jR),\quad j\ge1.\]
We obviously have 
$$ e^{\sigma\Delta}v(t,x)= \sum_{j\geqslant0} I_j \with   I_j:=\int_0^t\int_{A_j}\Phi_{\lambda_s}(x-y)f(s,y)\,dy\,ds.$$
By Cauchy-Schwarz inequality with the weight \(s^{1-\alpha}\), we have
\begin{align}\label{eq:I-j-CS-detailed}
  \abs{I_j} \leqslant F_j B_j\with 
  F_j  &:={\left(\int_0^t\int_{A_j}\abs{f(s,y)}^2s^{1-\alpha}\,dy\,ds\right)^{1/2}},\\\nonumber
 \andf  B_j
  &:={\left(\int_0^t\int_{A_j}\abs{\Phi_{\lambda_s}(x-y)}^2s^{\alpha-1}\,dy\,ds\right)^{1/2}} .
\end{align}
The first factor is controlled by the tent norm.  Indeed, since
\(A_j\subset B(x,2^{j+1}R)\) and \(t\le R^2\le (2^{j+1}R)^2\), we have
\begin{equation}\label{eq:first-factor-detailed}
  F_j   \lesssim  (2^jR)^{(n-2-2\alpha)/2}\norm{f}_{T^{\infty,2}(t^{1-\alpha}dy\,dt)},
  \qquad j\ge0.
\end{equation}
The worst term is the one that corresponds to the ball $A_0$ since $y\mapsto \Phi_{\lambda_s}(x-y)$
is singular at $x.$ 
 Now, from \eqref{eq:oseen-bound-detailed},  we have
\[  \int_{A_0}\abs{\Phi_{\lambda_s}(x-y)}^2\,dy \leqslant \norm{\Phi_{\lambda_s}}_{L^2(\R^3)}^2
  \lesssim   \lambda_s^{-n/2}.\]
Therefore
\[  B_0^2  \lesssim
  \int_0^t (t-s+\sigma)^{-n/2}s^{\alpha-1}\,ds.
\]
To estimate the right-hand side if  \(0<\sigma\le t\), we  split the integral into two pieces.  
For the part pertaining to  \((0,t/2)\), we have 
\[
  \int_0^{t/2}(t-s+\sigma)^{-n/2}s^{\alpha-1}\,ds
  \lesssim  t^{-n/2}\int_0^{t/2}s^{\alpha-1}\,ds
  \lesssim   t^{\alpha-n/2}
  \le   t^{\alpha-1}\sigma^{1-n/2}.
\]
On \((t/2,t)\), since \(s^{\alpha-1}\lesssim t^{\alpha-1}\),
\[  \int_{t/2}^t(t-s+\sigma)^{-n/2}s^{\alpha-1}\,ds\lesssim
  t^{\alpha-1}\int_{t/2}^t(t-s+\sigma)^{-n/2}\,ds  \lesssim  t^{\alpha-1}\sigma^{1-n/2}.\]
Thus, for \(0<\sigma\le t\),
\begin{equation}\label{eq:B0-sigma-small-detailed}
  B_0^2\lesssim t^{\alpha-1}\sigma^{1-n/2}.\end{equation}
Since \(R^2\simeq t\) for $0<\sigma\leqslant t,$  \eqref{eq:first-factor-detailed} and
\eqref{eq:B0-sigma-small-detailed} give
\begin{align*}  \sigma^{1/2}F_0B_0
  &\lesssim  \sigma^{1/2}R^{(n-2-2\alpha)/2}
  t^{(\alpha-1)/2}\sigma^{1/2-n/4}\norm{f}_{T^{\infty,2}(t^{1-\alpha}dy\,dt)}\\
  &\lesssim   \left(\frac{\sigma}{t}\right)^{1-n/4}\norm{f}_{T^{\infty,2}(t^{1-\alpha}dy\,dt)}\\
  &\lesssim  \norm{f}_{T^{\infty,2}(t^{1-\alpha}dy\,dt)}.\end{align*}
If \(\sigma\ge t\), then \(t-s+\sigma\ge\sigma\), whence
\begin{equation}\label{eq:B0-sigma-large-detailed}
  B_0^2   \lesssim  \sigma^{-n/2}\int_0^t s^{\alpha-1}\,ds
  \lesssim  \sigma^{-n/2}t^\alpha.\end{equation}
Since now \(R^2\simeq \sigma\), we get
\begin{align*}   \sigma^{1/2}F_0B_0&\lesssim  \sigma^{1/2}R^{(n-2-2\alpha)/2}
  \sigma^{-n/4}t^{\alpha/2}\norm{f}_{T^{\infty,2}(t^{1-\alpha}dy\,dt)}\\
  &\lesssim  \left(\frac{t}{\sigma}\right)^{\alpha/2}\norm{f}_{T^{\infty,2}(t^{1-\alpha}dy\,dt)}\\
  &\lesssim  \norm{f}_{T^{\infty,2}(t^{1-\alpha}dy\,dt)}.
\end{align*}
Thus, for any $\sigma,t>0,$ we have
\begin{equation}\label{eq:I0-detailed}
  \sigma^{1/2}\abs{I_0}  \lesssim  \norm{f}_{T^{\infty,2}(t^{1-\alpha}dy\,dt)}.
\end{equation}
It remains to estimate the terms that correspond to off-diagonal annuli \(A_j\) for \(j\ge1\).  Now,
\(y\in A_j\) implies that \(\abs{x-y}\gtrsim 2^jR\).  Since
\(\lambda_s=t-s+\sigma\le t+\sigma=R^2\), the kernel bound
\eqref{eq:oseen-bound-detailed} gives
\[  \abs{\Phi_{\lambda_s}(x-y)}\lesssim  \lambda_s^{-n/2}
 \left(\frac{\sqrt{\lambda_s}}{2^jR}\right)^n  \lesssim 2^{-nj}R^{-n}.\]
Consequently,
\[  \int_{A_j}\abs{\Phi_{\lambda_s}(x-y)}^2\,dy\lesssim
  2^{-2nj}R^{-2n}\abs{A_j}  \lesssim 2^{-nj}R^{-n}.\]
Therefore
\begin{equation}\label{eq:Bj-detailed}B_j^2  \lesssim
  2^{-nj}R^{-n}\int_0^t s^{\alpha-1}\,ds  \lesssim  2^{-nj}R^{-n}t^\alpha,\end{equation}
that is,
\[  B_j  \lesssim  2^{-nj/2}R^{-n/2}t^{\alpha/2}.\]
Combining this with \eqref{eq:first-factor-detailed}, we obtain
\begin{align*}  \sigma^{1/2}\abs{I_j}
  &\lesssim  \sigma^{1/2}  (2^jR)^{(n-2-2\alpha)/2}  2^{-nj/2}R^{-n/2}t^{\alpha/2}
  \norm{f}_{T^{\infty,2}(t^{1-\alpha}dy\,dt)}\\  &=  2^{-j(1+\alpha)}
  \bigl(\sigma^{1/2}R^{-1-\alpha}t^{\alpha/2}\bigr)  \norm{f}_{T^{\infty,2}(t^{1-\alpha}dy\,dt)}.
\end{align*}
The factor in parentheses is uniformly bounded. Indeed, if \(0<\sigma\le t\), then
\(R^2\simeq t\), and
\[  \sigma^{1/2}R^{-1-\alpha}t^{\alpha/2}\lesssim
  \left(\frac{\sigma}{t}\right)^{1/2}  \le1.\]
If \(\sigma\ge t\), then \(R^2\simeq\sigma\), and
\[  \sigma^{1/2}R^{-1-\alpha}t^{\alpha/2}\lesssim \left(\frac{t}{\sigma}\right)^{\alpha/2} \le1.\]
Hence, for every \(j\ge1\),
\begin{equation}\label{eq:Ij-offdiag-detailed}
  \sigma^{1/2}\abs{I_j}\lesssim
  2^{-j(1+\alpha)}\norm{f}_{T^{\infty,2}(t^{1-\alpha}dy\,dt)}.\end{equation}
The series \(\sum_{j\ge1}2^{-j(1+\alpha)}\) is summable.  Hence, combining \eqref{eq:I0-detailed} and \eqref{eq:Ij-offdiag-detailed}, we obtain
\eqref{eq:Besov-Duhamel-target-detailed}.  Taking the supremum over
\(x\in\R^n\), \(\sigma>0\), and \(t>0\)   completes the proof.
\end{proof}

Finally, bounding the gradient of $v$ is ensured by the following lemma.
\begin{lemma}\label{l:nabla} If $0<\alpha<1$ then we have 
$$\|\nabla v\|_{\mathbf{T}^{\infty,2}(t^{-\alpha}dy\,dt)}\lesssim \|f\|_{{T}^{\infty,2}(t^{1-\alpha}dy\,dt)}.$$    
\end{lemma}
\begin{proof}  Denote by $T$ the map $f\mapsto \nabla v$ with $v$ defined in \eqref{eq:v}.
For the time being, fix some $x_0\in\R^n$ and $r>0.$ 
We want to show (using again the short notation $B_r=B(x_0,r)$) that
$$\int_0^{r^2}\!\!\int_{B_r}|Tf(y,t)|^2t^{-\alpha}\,dy\,dt
\lesssim r^{n-2\alpha-2}\|f\|^2_{T^{\infty,2}(t^{1-\alpha}dy dt)}.$$
To do this, set $F_0:=B(x_0,2r)$  and  $F_j:=B(x_0,2^{j+1}r)\setminus B(x_0,2^j r)$ for $j\geqslant1,$ and decompose 
$f$ into 
$$f=f_0+\sum_{j\geqslant1} f_j\with f_j(x,t):= f(x,t)\1_{F_j}(x)\ \hbox{ for all } \  j\geqslant0.$$
To handle the term $f_0,$ it suffices to use the well-known property 
$$\|\nabla e^{\tau\Delta}\|_{{\mathcal L}(L^2(\R^n))}\lesssim \tau^{-\frac12},\qquad \tau>0.$$
From it and Minkowski inequality, we readily get 
$$\|T f_0(t)\|_{L^2(B_r)}\leqslant\int_0^t \|\nabla e^{(t-s)\Delta}f_0(s)\|_{L^2(\R^n)}\,ds
\lesssim \int_0^t (t-s)^{-\frac12}\|f_0(s)\|_{L^2}\,ds.$$
Then, using the weighted Hardy inequality of Lemma \ref{l:hardy} (in the appendix), we end up with
$$\int_0^{r^2}\|Tf_0(t)\|^2_{L^2(B_r)}t^{-\alpha}\,dt \lesssim \int_0^{r^2} \|f_0(s)\|_{L^2(B_r)}^2 s^{1-\alpha}\,ds
\lesssim r^{n-2\alpha-2}\|f\|^2_{T^{\infty,2}(t^{1-\alpha}dy dt)}.$$
To bound the terms $Tf_j$ with $j\geqslant1,$ we note that by Minkowski inequality, 
\begin{align*}
\int_0^{r^2} \|Tf_j(t)\|^2_{L^2(B_r)} t^{-\alpha}\,dt &\leqslant I_j +\sum_{k\geqslant1} J_{j,k}\\ 
\with{I}_j&:= \int_0^{r^2}\!\!\int_{B_r}\biggl|\int_{\frac t2}^t \nabla e^{(t-s)\Delta}f_j(y,s)\,ds\biggr|^2 t^{-\alpha}\,dt\,dy\\
\andf J_{j,k}&:=\int_{B(x_0,r)}\int_0^{r^2}\biggl|\int_{2^{-k-1}t}^{2^{-k}t}\nabla e^{(t-s)\Delta}f_j(y,s) ds\biggr|^2t^{-\alpha}\, dt\, dy.\end{align*}
To bound the terms $J_{j,k},$ we start from the Cauchy-Schwarz inequality that gives
\begin{equation}\label{eq:Jjk}J_{j,k}\lesssim \int_0^{r^2}\int_{2^{-k-1}t}^{2^{-k}t}\biggl(\int_{B(x_0,r)}\left|\nabla e^{(t-s)\Delta}f_j(y,s)\right|^2 dy\biggr)2^{-k} t^{1-\alpha}\,ds\, dt.\end{equation}
We argue exactly in the same manner as in Lemma~\ref{lemtri} to write
$$\begin{aligned}
\int_{B(x_0,r)}
\bigg|\nabla e^{(t-s)\Delta}&f_j(y,s)\bigg|^2 dy\\
    &\lesssim   \int_{B(x_0,r)} \biggl|\int_{F_j}(t-s)^{-\frac{n}{2}}\exp\left(-\frac{|x-y|^2}{t-s}\right)\frac{|x-y|}{t-s}f(x,s)\, dx\biggr|^2 dy \\
   &   \lesssim   \int_{B(x_0,r)} \biggl|\int_{F_j}(t-s)^{-\frac{n}{2}}\left(\frac{|x-y|^2}{t-s}\right)^{-\frac{n}{2}-\frac{1}{2}}\frac{|x-y|}{t-s}f(x,s) \,dx\biggr|^2 dy \\
 &\lesssim \int_{B(x_0,r)} \biggl|\int_{F_j}\frac{f(x,s)}{|x-y|^n\sqrt{t-s}} dx \biggr|^2 dy\\[1ex]
 &\lesssim (2^jr)^{-2n} r^n(2^jr)^n \|f(\cdot,s)\|_{L^2(F_j)}^2(t-s)^{-1}\\[1ex]
& \lesssim 2^{-jn} \|f(\cdot,s)\|_{L^2(F_j)}^2 (t-s)^{-1}.
 \end{aligned}$$
Plugging this into \eqref{eq:Jjk} yields
$$\begin{aligned}
    J_{j,k}&\lesssim  \int_0^{r^2}\int_{2^{-k-1}t}^{2^{-k}t} 2^{-jn} (t-s)^{-1} \|f(\cdot,s)\|_{L^2(F_j)}^2 2^{-k} t^{1-\alpha}\,ds\, dt\\
    &=\int_0^{2^{-k}r^2}\int_{2^{k}s}^{2^{k+1}s} 2^{-jn} (t-s)^{-1} \|f(\cdot,s)\|_{L^2(F_j)}^2 2^{-k} t^{1-\alpha}\,dt\, ds \\
    &\lesssim  \int_0^{2^{-k}r^2} \|f(\cdot,s)\|_{L^2(F_j)}^2 2^{-jn} 2^{-k} (2^ks)^{-1} (2^ks)^{2-\alpha}\,ds \\
    &\lesssim 2^{-jn} 2^{-k\alpha} \int_0^{(2^jr)^2}\int_{B(x_0,2^jr)}s^{1-\alpha} f^2(y,s) \,dy\, ds\\
    &\lesssim 2^{-k\alpha}2^{-j(2\alpha+2)} r^{n-2\alpha-2}\|f\|_{T^{\infty,2}(t^{1-\alpha}\,dy\,dt)},
\end{aligned}$$
where the equality in the second line follows from an easy change of variable.
\smallbreak
Minkowski inequality and summation of $J_{j,k}$ over all $j,k\geqslant 1 $ yield
$$\int_0^{r^2}\!\!\int_{B_r}\biggl|\int_0^{\frac t2} \nabla e^{(t-s)\Delta} (f\1_{\R^n\setminus B_r})(y,s)\,ds\biggr|^2 t^{-\alpha}\,dt\,dy\leqslant r^{n-2\alpha-2}\|f\|^2_{T^{\infty,2}(t^{1-\alpha}dy dt)}.$$
All that is left for us to do is to bound the terms $I_j$ for all $j\geqslant1.$
To do this, one can take advantage of the off-diagonal estimate
\eqref{eq:offdiag2} in the appendix. 
Inserting it into the inner integrals that define $I_j$ yields
$$I_j\lesssim \int_0^{r^2} \int_{\frac t2}^t (t-s)^{-1} \Bigl( 1+ \frac{(2^j r)^2}{\tau} \Bigr)^{-2N}
\|f_j(s)\|_{L^2}^2 t^{1-\alpha}\,ds\,dt.$$
Hence, swapping the order of integration, we discover that 
$$I_j\lesssim \int_0^{r^2} \|f_j(s)\|_{L^2}^2 s^{1-\alpha}\biggl(\int_0^s u^{-1}\Bigl(1+\frac{4^jr^2}{u}\Bigr)^{-2N}du\biggr)ds.
$$
Now, for $s\leqslant r^2,$ it is clear that 
$$\int_0^s u^{-1}\Bigl(1+\frac{4^jr^2}{u}\Bigr)^{-2N}du\lesssim 4^{-4jN},
$$
which eventually leads (due to the definition of $f_j$) to 
$$I_j\lesssim 2^{-4jN} \int_0^{r^2} \|f_j(s)\|_{L^2}^2 s^{1-\alpha}\,ds\lesssim 
2^{-4jN} (2^{j+1}r)^{n-2\alpha-2}\|f\|^2_{T^{\infty,2}(t^{1-\alpha}dydt)}.$$
If one takes $N$ sufficiently large, then the right-hand side is summable with respect to $j,$ 
which completes the proof. 
\end{proof}


\subsection{Estimate of the convection term}

The general idea is that, since $u\cdot\nabla u=\div(u\otimes u),$  the following 
bilinear estimate holds true:
\begin{equation}
\|u\cdot\nabla u\|_{L^2(\R^n)}\lesssim \|u\|_{\dot B^{-1}_{\infty,\infty}(\R^n)}\|\nabla^2 u\|_{L^2(\R^n)}.\end{equation}
This can be easily obtained from Bony's decomposition and continuity results of the paraproduct and remainder operators
(see below). 
The difficulty however, is that, to be consistent with  our definition of tent spaces, we need to establish an
inequality involving the $L^2$ norm \emph{on balls}, rather than on the whole space. 
To do this, we shall first establish two preliminary results. 

\begin{lemma}\label{l:locH2}   Let $x_0\in\R^n$ and $r>0.$ Denote $B_r:=B(x_0,r).$ 
There exists an absolute constant such that for any $u\in H^2_{loc}(\R^n)$ and $\phi\in\cC^\infty_c(B_{2r})$ 
such that $\phi\equiv1$ on a neigborhood of $B_r,$ we have 
$$
\|u\phi\|_{\dot H^2}\leqslant C\bigl(\|u\|_{L^2(B_{2r})}\|D\phi\|_{L^\infty}^{1/2}\|D^3\phi\|^{1/2}_{L^\infty}+ \|D^2u\|_{L^2(B_{2r})}\|\phi\|_{L^\infty}\bigr)\cdotp$$
\end{lemma}
\begin{proof}
Due to the properties of $\phi,$ we have
$$
\|\nabla^2u\|_{L^2(B_r)}=\|\nabla^2(\phi u)\|_{L^2(B_r)}\leqslant \|\nabla^2(\phi u)\|_{L^2(B_{2r})}=\|\Delta(\phi u)\|_{L^2(B_{2r})}.$$
By Leibniz formula, 
$$\Delta(\phi u)=u\Delta\phi +\phi\Delta u+2\nabla u\cdot\nabla\phi.$$
Hence 
$$\|\nabla^2u\|_{L^2(B_{2r})}\leq\|u\|_{L^2(B_{2r})}\|\Delta\phi\|_{L^\infty} +  \|\Delta u\|_{L^2(B_{2r})}\|\phi\|_{L^\infty} 
+2\|\nabla u\cdot\nabla\phi\|_{L^2(B_{2r})}.$$
Since $\phi$ and $\nabla \phi$ vanish on $\partial B_{2r},$ integrating by parts reveals that (with the summation convention on repeated indices):
\begin{align*}
\|\nabla u\cdot\nabla\phi\|_{L^2(B_{2r})}^2&\leqslant \int_{B_{2r}} \partial_j u\partial_j\phi \partial_ku\partial_k\phi\,dx\\
&=-\int_{B_{2r}} \!\! u \partial^2_{jk}u\partial_j\phi\partial_k\phi\,dx
-\int_{B_{2r}}\!\! u\Delta\phi \nabla u\cdot\nabla\phi\,dx+\frac14\int_{B_{2r}}\! u^2\Delta|\nabla\phi|^2dx.\end{align*}
Hence
$$\displaylines{\|\nabla u\cdot\nabla\phi\|_{L^2(B_{2r})}^2\leqslant \|u\|_{L^2(B_{2r})}\|D^2u\|_{L^2(B_{2r})}\|\nabla\phi\|_{L^\infty}^2+
\|u\|_{L^2(B_{2r})}\|\Delta\phi\|_{L^\infty}\|\nabla u\cdot\nabla\phi\|_{L^2(B_{2r})}\hfill\cr\hfill+\|u\|_{L^2(B_{2r})}^2(\|D^2\phi\|_{L^\infty}^2
+\|D\phi\|_{L^\infty}\|D^3\phi\|_{L^\infty}).}$$
By Young inequality and the fact that 
$$\|\nabla\phi\|_{L^\infty}^2\leq\|\phi\|_{L^\infty}\|\nabla^2\phi\|_{L^\infty}\andf
\|D^2\phi\|_{L^\infty}^2\leqslant \|D\phi\|_{L^\infty}\|D^3\phi\|_{L^\infty},$$ we deduce that
$$\|\nabla u\cdot\nabla\phi\|_{L^2(B_{2r})}\lesssim \|u\|_{L^2(B_{2r})}\|D\phi\|_{L^\infty}^{1/2}\|D^3\phi\|_{L^\infty}^{1/2}+ \|D^2u\|_{L^2(B_{2r})}\|\phi\|_{L^\infty},$$
whence the desired inequality.
\end{proof}

\begin{lemma}\label{l:loc} The space $\dot B^{-1}_{\infty,\infty}(\R^n)$ is stable by multiplication by $C^\infty_c(\R^n)$ functions. More precisely, 
in the case $n\geq3,$ if $\chi$ is in $\cC^\infty_c(\R^n)$ and $u,$ in $\dot B^{-1}_{\infty,\infty}(\R^n)$  then the following inequality holds:
$$\|\chi u\|_{\dot B^{-1}_{\infty,\infty}(\R^n)}\leqslant 
C\|\chi\|_{L^\infty(\R^n)\cap \dot H^{n/2}(\R^n)} \|u\|_{\dot B^{-1}_{\infty,\infty}(\R^n)}.$$
\end{lemma}
\begin{proof} It relies on Bony's decomposition (see \cite[Chap. 2]{BCD}):
$$\chi u=T_\chi u+ T_u\chi +R(u,\chi)$$
and  the following well-known facts:
\begin{enumerate}
\item the paraproduct operator $T$ maps $L^\infty\times \dot B^{-1}_{\infty,\infty}$ to $ \dot B^{-1}_{\infty,\infty};$
\item  the paraproduct operator $T$ maps  $\dot B^{-1}_{\infty,\infty}\times L^\infty$ to $ \dot B^{-1}_{\infty,\infty};$
\item   the remainder operator $R$ maps $\dot H^{n/2}\times \dot B^{-1}_{\infty,\infty}$ to $ \dot B^{-1}_{\infty,\infty};$
\end{enumerate} 
where, we used that $n\geq3$ in the last item.
\end{proof}

\begin{lemma}\label{l:nonlinear}
The following inequality holds true:
\begin{equation}\label{eq:NL2}\|u\cdot\nabla u\|_{T^{\infty,2}(t^{1-\alpha}\,dydt)}\lesssim \|u\|_{L^\infty(\R_+;\dot B^{-1}_{\infty,\infty})}
\bigl(\|u\|_X+ \|\Delta u\|_{T^{\infty,2}(t^{1-\alpha}\,dydt)}\bigr)\cdotp\end{equation}
\end{lemma}
\begin{proof}
Fix a function $\chi$ in $\cC^\infty(\R^n),$ supported in the ball $B(0,2)$ and with value $1$ on a neighborhood of $B(0,1).$
For any $x_0\in\R^n$ and $r>0,$ we set 
$$\chi_{x_0,r}(x):= \chi\biggl(\frac{x_0-x}r\biggr),\quad x\in\R^n.$$
Since, by construction, $\chi_{x_0,r}\equiv1$ near $B_r:=B(x_0,r),$ we have
$$u\cdot\nabla u= \chi_{x_0,r} u\cdot \nabla (\chi_{x_0,r} u)\quad\hbox{on}\quad  B(x_0,r),$$ whence
$$\|u\cdot\nabla u\|_{L^2(B_r)} \leqslant \|z\cdot \nabla z\|_{L^2(\R^n)}\with z:=\chi_{x_0,r} u.$$
 We claim that 
\begin{multline}\label{eq:prod1} 
\|z\cdot \nabla z\|_{L^2(B_r)}\lesssim  
r  \|\chi_{x_0,r} u\|_{\dot B^{-1}_{\infty,\infty}}\|u\cdot\nabla\chi\|_{\dot H^2}
\\+\bigl(\|\chi_{x_0,r} u \|_{\dot B^{-1}_{\infty,\infty}}+r\|u\cdot\nabla\chi_{x_0,r}\|_{\dot B^{-1}_{\infty,\infty}}\bigr)\|\chi_{x_0,r} u\|_{\dot H^2}.
\end{multline}
Indeed,  due to $\div u=0,$ we have 
$$z\cdot \nabla z= \div (z\otimes z) - z\: u\cdot\nabla\chi_{x_0,r}.$$
Hence 
\begin{equation}\label{eq:prod2}\|z\cdot \nabla z\|_{L^2(B_r)}\leqslant \|\div (z\otimes z)\|_{L^2(B_r)}+\| z\: u\cdot\nabla\chi\|_{L^2(B_r)}.\end{equation}
For the first term, we use that, due to the definition of $z,$ of $\chi_{x_0,r}\equiv1$ on $B_r$ and to product laws in Besov spaces, 
\begin{equation}\label{eq:divdiv}\|\div (z\otimes z)\|_{L^2(B_r)}\leqslant  \|\div (z\otimes z)\|_{L^2(\R^n)}\leqslant \|z\otimes z\|_{\dot H^1}\lesssim \|z\|_{\dot B^{-1}_{\infty,\infty}}
 \|z\|_{\dot H^2}.\end{equation}
 This stems from the fact that any product $z^i z^j$ may be decomposed according to Bony's decomposition (see \cite[Chap. 2]{BCD})
 $$ z^iz^j=T_{z^i}z^j +T_{z^j}z^i +R(z^i,z^j)$$ and that
 \begin{enumerate}
 \item  the paraproduct operator maps  $\dot B^{-1}_{\infty,\infty}\times \dot H^2$ to $ \dot H^{1};$
\item   the remainder operator maps  $\dot B^{-1}_{\infty,\infty}\times \dot H^2$ to $ \dot H^{1}.$
\end{enumerate}  
The second term of \eqref{eq:prod2} is supported in $B_{2r}.$ Hence, owing to Poincar\'e's inequality, 
$$\| z\: u\cdot\nabla\chi_{x_0,r}\|_{L^2(B_r)}\lesssim r \| z\: u\cdot\nabla\chi_{x_0,r}\|_{\dot H^1}.$$
Now, the continuity results for the paraproduct and remainder we mentioned just above ensure that 
$$\| z\: u\cdot\nabla\chi_{x_0,r}\|_{\dot H^1}\lesssim \|z\|_{\dot B^{-1}_{\infty,\infty}}\|u\cdot\nabla\chi_{x_0,r}\|_{\dot H^2}
+  \|u\cdot\nabla\chi_{x_0,r}\|_{\dot B^{-1}_{\infty,\infty}}\|z\|_{\dot H^2},$$
whence \eqref{eq:prod1}.
\medbreak
Clearly, owing to the definition of $\chi_{x_0,r}$ from $\chi$ by dilation and translation, we have
$$
\|\chi_{x_0,r}\|_{L^\infty}+\|\chi_{x_0,r}\|_{\dot H^{n/2}}+r\|\nabla\chi_{x_0,r}\|_{\dot H^{n/2}}\simeq 1\andf 
\|\nabla^k\chi_{x_0,r}\|_{L^\infty}\lesssim r^{-k}\quad\hbox{for }\  k\in\N.$$
Consequently, Lemma \ref{l:loc} ensures that
$$\|z\|_{\dot B^{-1}_{\infty,\infty}}+r\|u\cdot\nabla\chi_{x_0,r}\|_{\dot B^{-1}_{\infty,\infty}}\lesssim \|u \|_{\dot B^{-1}_{\infty,\infty}}$$
while Lemma \ref{l:locH2} gives us 
$$r\|u\cdot\nabla\chi_{x_0,r}\|_{\dot H^2} + \|z\|_{\dot H^2}\lesssim r^{-2}\|u\|_{L^2(B_{2r})} + \|\nabla^2u\|_{L^2(B_{2r})}.$$
We conclude that there exists an absolute constant $C$ such that for all $t>0,$ $x_0\in\R^n$ and $r>0,$ we have
$$\|(u\cdot\nabla u)(t)\|_{L^2(B_r)}\leqslant C\|u(t)\|_{\dot B^{-1}_{\infty,\infty}}\bigl( r^{-2}\|u(t)\|_{L^2(B_{2r})} 
+ \|\nabla^2u(t)\|_{L^2(B_{2r})}\bigr)\cdotp$$
At this stage, taking the $L^2([0,r^2], t^{1-\alpha}dt)$ norm of both sides, multiplying by $r^{\alpha+1-n/2},$ then 
taking the supremum on $x_0\in\R^n$ and $r>0,$ and recalling the definition of $X,$ we get 
the desired inequality. \end{proof}


\section{The proof of existence}\label{s:existence}

The strategy is to smooth out the initial velocity and to solve \eqref{inhom}
with the corresponding data by taking advantage of the classical well-posedness theory,
then to prove that the resulting sequence $(\rho^m,u^m)_{m\in\N}$ of solutions exists for all positive times
and is bounded in $L^\infty\times E_\alpha.$  This will enable us
 to pass to the limit in \eqref{inhom}, up to subsequence, by means of standard compactness arguments. 
\medbreak
More precisely, after fixing an initial data $(\rho_0,u_0)$ that meets the conditions of Theorem \ref{key},
we consider a sequence $(u_0^m)_{m\in\N}$ of divergence-free vector fields with coefficients in 
${H}^1$ such that  
\begin{equation}
\label{eq:convu}
\sup_{m\in\N} \|u_0^m\|_{U_\alpha}\lesssim \|u_0\|_{U_\alpha}\andf
\lim_{m\to\infty} u_0^m=u_0\quad\hbox{in}\quad {\mathcal S}'(\R^3).\end{equation}
Since the space $U_\alpha$ is defined by completion, one can find 
a sequence $(v_0^m)_{m\in\N}$ of Schwartz functions converging to $u_0$ in $U_\alpha.$
Then, setting $u_0^m:={\mathbb P}v_0^m,$ the elements of the sequence are in all Sobolev spaces and still converge to $u_0$ since ${\mathbb P}u_0=u_0$. Furthermore, Lemma \ref{l} ensures the inequality in \eqref{eq:convu}.

\medbreak
As $\rho_0$ is bounded and bounded away from zero, and $u_0^m$ is in ${H}^1,$ one can apply\footnote{There, the result is stated in the torus or 
 in a bounded domain, but adaptation to $\R^3$ is easy, see e.g. \cite[Rem. 2.1]{PZZ}.}
  \cite[Rem. 2.4]{dan14} to get a  unique maximal solution $(\rho^m,u^m,\nabla p^m)$ of \eqref{inhom} with  data $(\rho_0,u_0^m)$ 
   defined on $\R^3\times[0,T^m),$ 
satisfying   \begin{equation}\label{eq:boundrhom}
\forall t\in[0,T^m),\; \|a^m(t)\|_{L^\infty}=\|a_0\|_{L^\infty}\with a^m:=\rho^m-1,\end{equation}
  the energy balance  \begin{equation}\label{eq:energy}
\frac 12\|\sqrt{\rho^m(t)}\,u^m(t)\|_{L^2}^2+\int_0^t\|\nabla u^m\|_{L^2}^2\,ds
=\frac 12\|\sqrt{\rho_0}\,u^m_0\|_{L^2}^2,\qquad t\in[0,T^m),\end{equation}
and  such that for all $T<T^m,$ we have
  $$u^m\in L^\infty([0,T];H^1(\R^3)) \cap L^2(0,T;H^2(\R^3))\andf\partial_tu^m,\nabla p^m \in L^2(0,T;L^2(\R^3)).$$
We claim that there exist two  constants $c$ and $C$ depending only on $\alpha\in(0,1)$ and such that, if
\begin{equation}\label{eq:smalla0}
    \|a_0\|_{L^\infty} \leqslant c,
\end{equation}
then, denoting by $E_\alpha(T)$ the version of $E_\alpha$ pertaining to $\R^3\times(0,T),$  we have
\begin{equation}\label{eq:global}
    \|u^m\|_{E_\alpha(T)} \leqslant C\bigl(\|u_0\|_{U_\alpha}+    \|u^m\|_{E_\alpha(T)}^2\bigr)\quad\hbox{for all }\ T\in(0,T^m).
\end{equation}
Note that the smoothness of $u^m$ ensures that
$$\partial_t(\rho^m u^m) +\div(\rho^m u^m\otimes u^m)=(1+a^m)\partial_tu^m +\rho^m u^m\cdot\nabla u^m.$$ 
Hence $u^m$ satisfies
$$\begin{aligned}u^m(t)&=e^{t\Delta} u_0^m
-\int_0^te^{(t-\tau)\Delta}{\mathbb P}\Bigl((\rho^m u^m\cdot\nabla u^m)(\tau)+
(a^m\partial_\tau u^m)(\tau)\Bigr)d\tau\\&=: u^m_L(t)+v^m(t).\end{aligned}$$
 Lemma \ref{lemodin} readily gives that 
 $\|u_L^m\|_{E_\alpha}\leqslant C \|u_0\|_{U_\alpha}.$
Next, let us split $v^m$ into the part $v^m_1$ corresponding to $\rho^m u^m\cdot\nabla u^m$
and the part $v^m_2$ corresponding to $a^m\partial_\tau u^m.$

For $v^m_1,$ combining Lemma \ref{lempiat},  Inequality  \eqref{eq:duhatent} and Lemmas \ref{l:X}, \ref{l:besov}, 
\ref{l:nabla}, \ref{l:nonlinear}, we discover that for all $T\in(0,T^m),$  
\begin{align*}     \|v^m_1\|_{E_\alpha(T)}
    &\lesssim 
    \|{\mathbb P}(\rho^m u^m\cdot\nabla u^m)\|_{T^{\infty,2}(0,T;t^{1-\alpha}dydt)}\\
  &\lesssim \|\rho^m\|_{L^\infty} \|u^m\cdot\nabla u^m\|_{T^{\infty,2}(0,T;t^{1-\alpha}dydt)}\\
  &\lesssim \|\rho_0\|_{L^\infty}
    \|u^m\|_{L^\infty(0,T;\dot B^{-1}_{\infty,\infty})}\bigl(\|u^m\|_{L^\infty(0,T;\dot B^{-1}_{\infty,\infty})}+ \|\Delta u^m\|_{T^{\infty,2}(0,T;t^{1-\alpha}dydt)}\bigr)\cdotp
\end{align*}
For $v^m_2,$  combining Lemmas \ref{lempiat} and \ref{lemdva}, then \eqref{eq:boundrhom} gives us
\begin{align*}
      \|v^m_2\|_{E_\alpha(T)}
    &\lesssim \|{\mathbb P}(a^m\partial_tu^m)\|_{T^{\infty,2}(0,T;t^{1-\alpha}dydt)}\\
    &\lesssim \|a_0\|_{L^\infty}\| \partial_tu^m\|_{T^{\infty,2}(0,T;t^{1-\alpha}dydt)}.
    \end{align*}
Hence, Lemmas \ref{l:X} and \ref{l:nabla} allow us to get (arguing as before 
to bound the right-hand side):
\begin{multline*}  \|v^m\|_{E_\alpha(T)}\lesssim \|\rho_0\|_{L^\infty}   \|u^m\|_{L^\infty(0,T;\dot B^{-1}_{\infty,\infty})}\bigl(\|u^m\|_{L^\infty(0,T;\dot B^{-1}_{\infty,\infty})}+ \|\Delta u^m\|_{T^{\infty,2}(0,T;t^{1-\alpha}dydt)}\bigr)\\+
    \|a_0\|_{L^\infty}\| \partial_tu^m\|_{T^{\infty,2}(0,T;t^{1-\alpha}\,dydt)}.\end{multline*}
Putting all these inequalities together and assuming that $c$ in \eqref{eq:smalla0}
is small enough completes the proof of \eqref{eq:global}.
Consequently, if 
$$\|u_0\|_{U_\alpha}\leqslant \frac1{4C^2},$$ then we have
 \begin{equation}\label{eq:uniformbound} 
 \sup_{m\in\N}  \|u^m\|_{E_\alpha(T^m)}\leqslant 2C\|u_0\|_{U_\alpha}.\end{equation}
 \medbreak
  Since the $L^2$ norm of $u^m(t)$ can be controlled by the data thanks to the energy balance \eqref{eq:energy}, 
to prove that $T^m=\infty,$  it suffices to check that \eqref{eq:uniformbound} implies that
\begin{equation}\label{eq:unifH1} \sup_{t\in[0,T^m)} \|\nabla u^m(t)\|_{L^2}<\infty.\end{equation}
Indeed,  the uniqueness of the solution $(\rho^m,u^m)$ combined with the lower bound 
(in terms of the $H^1$ norm) given by  \cite[Rem. 2.4]{dan14} would enable us to continue the solution if $T^m$ were finite. 

To achieve \eqref{eq:unifH1}, we use Inequality (3.6) of \cite{dan14} that
is valid in any  space dimension, namely
\begin{equation}\label{eq:H1}\frac d{dt}\|\nabla u^m\|_{L^2}^2 +\frac12\|\sqrt\rho\, \partial_tu^m\|_{L^2}^2
+\frac1{4\|\rho_0\|_{L^\infty}}\|\nabla^2 u^m\|_{L^2}^2
\leqslant\frac32\|u^m\cdot\nabla u^m\|_{L^2}^2.\end{equation}
To bound the right-hand side,  we use the fact that
 $u^m\cdot\nabla u^m=\div(u^m\otimes u^m)$ and
 the following bilinear estimate (see \eqref{eq:divdiv}):
\begin{equation}\label{eq:bilinear}\|ab\|_{\dot H^1}\leqslant C\bigl(\|a\|_{\dot B^{-1}_{\infty,\infty}}\|b\|_{\dot H^2}+\|b\|_{\dot B^{-1}_{\infty,\infty}}\|a\|_{\dot H^2}\bigr)\cdotp\end{equation}
Hence we have
$$\|u^m\cdot\nabla u^m\|_{L^2}=\|\div(u^m\otimes u^m)\|_{L^2}
\leqslant C \|u^m\|_{\dot B^{-1}_{\infty,\infty}}\|u^m\|_{\dot H^2}.$$
Therefore, using \eqref{eq:uniformbound}  and \eqref{eq:smalldata} yields
$$\|u^m\cdot\nabla u^m\|_{L^2}\leqslant C\varepsilon_0\|u^m\|_{\dot H^{2}}.$$
This term may thus be absorbed by the left-hand side of  \eqref{eq:H1} (taking smaller $\varepsilon_0$ if necessary) and
 integrating in time yields:
 $$
\|\nabla u^m(t)\|_{L^2}^2 + \frac18\int_0^t \bigl(\|\partial_tu^m\|_{L^2}^2+\|\nabla^2 u^m\|_{L^2}^2\bigr)d\tau
\leqslant \|\nabla u^m_0\|_{L^2}^2 \quad\hbox{for all }\ t\in[0,T^m).$$
 Consequently, we have \eqref{eq:unifH1} and thus $T^m=\infty$ as explained before.
 \medbreak
 Granted with \eqref{eq:boundrhom} and \eqref{eq:uniformbound}, we can now assert 
 that $(a^m,u^m)_{m\in\N}$ converges up to subsequence to some pair $(a,u)$  for the weak $*$ topology
 of $L^\infty(\R^n\times\R_+)\times E_\alpha.$
Looking at the boundedness of the time derivative of $(u^m)_{m\in\N},$ one can then glean some strong compactness which 
allows us to pass to the limit in the nonlinear terms of 
 \eqref{inhom}. The rest of the proof is classical. It is not  detailed here for the sake of conciseness.

\subsection*{Acknowledgments}
This work was started while the second named author was granted by the ANR project ANR-15-CE40-0011.
The second named author is also deeply grateful to Alexey Cheskidov, and to Pascal Auscher and Hedong Hou for a number of helpful discussions.


\section{Appendix}

The following off-diagonal estimates are the key to Lemmas \ref{lemdva}
and \ref{l:nabla}.

\begin{lemma}
\label{lemtri} Let $x_0\in\R^n$ and $r>0.$
   For $j\geqslant 1,$ set   
  {$F_j:=B(x_0,\,2^{j+1} r)\backslash B(x_0,\, 2^{j}r)$}, 
  and denote $E:=B(x_0,\,r).$   
  For all positive numbers $\theta$ and $N$ and for all functions $f$ in $L^2_{loc}(\mathbb R^n),$ we have 
\begin{align}\label{eq:offdiag1}\|\theta \, \Delta \, e^{\theta \Delta} ( \mathbbm{1}_{F_j} f)\|_{L^2(E)} &\leqslant C \Bigl( 1+ \frac{(2^j r)^2}{\theta} \Bigr)^{-N} \|f\|_{L^2(F_j)}, \\
\label{eq:offdiag2}\|\theta^{\frac12} \nabla \, e^{\theta\Delta}(\1_{F_j}\,f)\|_{L^2(E)} &\leqslant C 
\Bigl( 1+ \frac{(2^j r)^2}{\theta} \Bigr)^{-N} \|f\|_{L^2(F_j)},
\end{align}
where the constant $C$ depends only on $n$ and $N$.
\end{lemma}
\begin{proof}
Let us start with \eqref{eq:offdiag1}.
Due to the fact that the function $e^{\theta \Delta} \bigl( \mathbbm{1}_{F_j} f\bigr)$ solves the system 
\begin{equation*}
 \begin{cases}
   \partial _\theta u = \Delta u, \\
   u \big|_{\theta =0} = \mathbbm{1}_{F_j} f,
 \end{cases}
\end{equation*}
we see that it suffices to estimate the following integral
$$\int_E \left| \left(\theta \, \partial_\theta \Phi_\theta \right) \ast \mathbbm{1}_{F_j} f(x) \right|^2 dx.$$ 
To this end, for $z \in \mathbb R^n$, denote
\begin{align*}
K_\theta (z) &:= \theta \cdot \partial_\theta \Phi_\theta (z) 
=\pi^{-n} \theta^{- \frac n2} e^{- \frac{|z|^2}{\theta}} \Bigl(-\frac{n}{2} + \frac{|z|^2}{\theta}\Bigr)\cdotp
\end{align*}
For any $N>0,$  there exists a constant $C$ that depends only on $n$ 
and $N,$ such that 
$$|K_\theta (z) | \leqslant C\, \theta^{-\frac{n}{2}} \Bigl(1 + \frac{|z|^2}{\theta} \Bigr)^{-N-\frac{n}{2}}.$$
Hence, we infer the following estimate:
\begin{align*}
\int_E \big| \bigl( \theta \, \partial_\theta \Psi_\theta &\bigr) \ast \mathbbm{1}_{F_j} f(x) \big|^2 dx \leqslant C\int_E \Big(\int_{F_j} \theta^{-\frac{n}{2}} \Bigl(1+ \frac{|x-y|^2}{\theta} \Bigr)^{-N-\frac{n}{2}} |f(y)| \,dy \Big)^2 dx \\
&\leqslant C \int_E \theta^{-n}\Bigl(1+\frac{(2^j r)^2}{\theta}\Bigr)^{-2N-n}\cdot \Bigl( \int_{F_j} f^2(y) \, dy\Bigr) \cdot \big|{F_j} \big| dx  \\
&\leqslant C \biggl(1+\frac{(2^j r)^2}{\theta}\biggr)^{-2N} \cdot\Bigl( {\frac{\theta}{\theta +(2^j r)^2}}\Bigr)^n\cdot \theta^{-n} \bigl(2^j r\bigr)^n r^n\cdot \int_{F_j} f^2(y) \, dy \\
&\leqslant C \biggl( 1+ \frac{(2^j r)^2}{\theta} \biggr)^{-2N} \cdot\int_{F_j} f^2(y) \, dy,
\end{align*}
and Inequality \eqref{eq:offdiag1} is proved.
\medbreak
Next, let us prove Inequality \eqref{eq:offdiag2}. 
Obviously, we have $\frac{2^jr}{2}\leqslant \mathrm{dist}(E,F_j)$ and the kernel $K_\theta$ of the operator $f\mapsto \mathbbm{1}_{E} \nabla e^{\theta \Delta} (\mathbbm{1}_{F_j} f)$ is given by
$$K_\theta(x,y)= \mathbbm{1}_{E}(x)\mathbbm{1}_{F_j}(y) \nabla \Phi_\theta(x-y).$$
For $x\in E$ and $y\in F_j$ we have $2^{j}r\leqslant 2 |x-y|$. Therefore,
for any $N>0,$ there exists a constant $C$ such that
$$\sup_{x\in E} \int_{F_j} |K_\theta (x,y)| dy \leqslant C \theta^{-\frac{1}{2}} \int_{|z|\geqslant \frac{2^jr}{2}} \theta^{-\frac{n}{2}}e^{-\frac{|z|^2}{2\theta}} dz \leqslant
C\theta^{-\frac{1}{2}} \left(1+\frac{(2^jr)^2}{\theta}\right)^{-N}\cdotp$$
The very same bound holds for 
$$\sup_{x\in F_j} \int_{E} |K_\theta (x,y)| dx.$$
Schur’s lemma then gives
$$\|\nabla e^{\theta \Delta} (\mathbbm{1}_{F_j} f)\|_{L^2(E)}\lesssim  \theta^{-\frac{1}{2}} \left(1+\frac{(2^jr)^2}{\theta}\right)^{-N} \|f\|_{L^2(F_j)},$$
which means that the desired Inequality \eqref{eq:offdiag2} is valid.
\end{proof}
\medbreak
The following weighted Hardy inequality  was used in the proof of Lemma \ref{l:nabla}.
\begin{lemma}\label{l:hardy}
Let $0<\alpha<1$ and $T>0.$ 
There exists a constant $C_\alpha$ such that for any
measurable function $h:[0,T]\to[0,\infty),$ we have
$$\int_0^{T}\biggl(\int_0^t(t-s)^{-\frac12} h(s)\,ds\biggr)^2t^{-\alpha}\,dt
\leqslant C_\alpha \int_0^{T} h^2(s) s^{1-\alpha}\,ds.$$
\end{lemma}
\begin{proof} Set $g(s):=s^{\frac{1-\alpha}2}h(s).$ From the change of variable $s=tr,$ we get 
$$\int_0^t(t-s)^{-\frac12} h(s)\,ds
= t^{\frac\alpha2}\int_0^1(1-r)^{-\frac12} r^{\frac{\alpha-1}2}g(r)\,dr.$$
This implies that
$$t^{-\frac\alpha2} \int_0^t(t-s)^{-\frac12} h(s)\,ds=\int_0^1 K_\alpha(r) g(tr)\,dr 
\with K_\alpha(r):=(1-r)^{-\frac12} r^{\frac{\alpha-1}2},$$
whence,  taking the $L^2(0,T)$ norm with respect to the variable $t,$
\begin{align*}
\biggl\|t^{-\frac\alpha2} \int_0^t(t-s)^{-\frac12} h(s)\,ds\biggr\|_{L^2(0,T)}
&\leqslant \int_0^1 K_\alpha(r) \|g(tr)\|_{L^2(0,T)}\\
&\leqslant \|g\|_{L^2(0,T)} \int_0^1 r^{-\frac12} K_\alpha(r)\,dr\\
&\leqslant \|g\|_{L^2(0,T)}\int_0^1 (1-r)^{-\frac12} r^{\frac\alpha2-1}\,dr\lesssim \|g\|_{L^2(0,T)}.
\end{align*}
Replacing $g$ by its value completes the proof. 
\end{proof}
\medbreak

The following proposition states that the space  $U_{\alpha}$ 
is \emph{critical}. Although the proof is elementary, we give it for the reader's convenience.  
\begin{prop}
\label{prop1}
For all $\lambda >0,$ if $f_\lambda (x) := f(\lambda \, x),$  then 
$\|f_\lambda\|_{ U_{\alpha}} = \lambda ^{-1}  \|f\|_{ U_{\alpha}}$.
\end{prop}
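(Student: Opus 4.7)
The plan is a straightforward change of variables, organized so as to make visible that the exponents in the definition of $U_\alpha$ are chosen precisely to produce the desired $\lambda^{-1}$ scaling.

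First I would reduce the convolution $f_\lambda \ast \Psi_t$ to a convolution involving only $f$. A direct substitution $w = \lambda z$ in the defining integral yields the identity
\begin{equation*}
(f_\lambda \ast \Psi_t)(y) \;=\; (f \ast \Psi_{\lambda t})(\lambda y),
\end{equation*}
which is the only nontrivial ingredient of the proof and relies just on the fact that $\Psi_t$ is an $L^1$-dilation of $\Psi$.

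Next I would insert this identity into the definition of $\|f_\lambda\|_{U_\alpha}$ and perform the substitution $s = \lambda t$, $z = \lambda y$ in the space-time integral. The Jacobian produces $\lambda^{-(n+1)}$, the weight $t^{-1-2\alpha}$ contributes $\lambda^{1+2\alpha}$, and the domain $(0,r) \times B(x_0,r)$ becomes $(0,\lambda r) \times B(\lambda x_0, \lambda r)$. Gathering these factors one gets
\begin{equation*}
\int_0^r \!\!\int_{B(x_0,r)} |f_\lambda \ast \Psi_t(y)|^2 \, t^{-1-2\alpha} \, dy\, dt \;=\; \lambda^{2\alpha-n} \int_0^{\lambda r} \!\!\int_{B(\lambda x_0, \lambda r)} |f \ast \Psi_s(z)|^2 \, s^{-1-2\alpha} \, dz\, ds.
\end{equation*}

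Finally I would divide by $r^{n-2\alpha-2}$. Writing $R = \lambda r$ and $X_0 = \lambda x_0$, the prefactor becomes $\lambda^{n-2\alpha-2} R^{-(n-2\alpha-2)}$, so combining with the $\lambda^{2\alpha-n}$ from the previous step collapses the powers of $\lambda$ to $\lambda^{-2}$. Since the map $(r,x_0) \mapsto (\lambda r, \lambda x_0)$ is a bijection of $(0,\infty) \times \mathbb R^n$, taking the supremum gives $\|f_\lambda\|_{U_\alpha}^2 = \lambda^{-2} \|f\|_{U_\alpha}^2$, and the claim follows by taking square roots. There is no serious obstacle here: the only thing to be careful about is the bookkeeping of the five exponents (Jacobian in $y$, Jacobian in $t$, weight $t^{-1-2\alpha}$, the factor $\lambda$ on $y$ inside $f \ast \Psi_{\lambda t}(\lambda y)$, and the outer normalization $r^{-(n-2\alpha-2)}$), and the observation that the specific exponent $n-2\alpha-2$ is precisely what makes this cancellation end in $\lambda^{-2}$ rather than some other power.
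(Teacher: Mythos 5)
Your proposal is correct and follows essentially the same route as the paper: the dilation identity $(f_\lambda \ast \Psi_t)(y) = (f \ast \Psi_{\lambda t})(\lambda y)$, the change of variables $s=\lambda t$, $z=\lambda y$, and the observation that $(r,x_0)\mapsto(\lambda r,\lambda x_0)$ is a bijection so the supremum is unchanged. The exponent bookkeeping ($\lambda^{2\alpha-n}$ from the integral, $\lambda^{n-2\alpha-2}$ from the normalization, collapsing to $\lambda^{-2}$) matches the paper's computation.
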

\begin{proof}
Note that $f_\lambda \ast \Psi_t (y)= f \ast \Psi_{\lambda t} (\lambda \, y)$ for all $y \in \mathbb R^n$.
 Hence, using the change of variable $s=\lambda t$ yields
\begin{equation}
\begin{split}
  \|f_\lambda\|^2_{U_{\alpha}}&= \underset{\atop{r>0}{x_0 \in \mathbb R^n}} {\sup} \,  \frac{1}{r^{-n-2 \alpha - 2}} \int_0 ^r \!\!\int_{B(x_0, r)} | f_\lambda \ast \Psi_t (y)|^2 \, t^{-1-2\alpha} \, dy\,dt  \\
&= \underset{\atop{r>0}{x_0 \in \mathbb R^n}}{\sup} \,  \frac{1}{r^{-n-2 \alpha - 2}} \int_0 ^ r \!\!\int_{B(x_0, r)} | f \ast \Psi_{ \lambda t} (\lambda y)|^2 \, t^{-1-2\alpha} \, dy\,dt  \\
&= \underset{\atop{r>0}{x_0 \in \mathbb R^n}}{\sup} \,  \frac{1}{r^{-n-2 \alpha - 2}} \int_0 ^{\lambda r}\!\! \int_{B(\lambda x_0, \, \lambda r)} | f \ast \Psi_s ( z )|^2 \, s^{-1-2\alpha} \, \lambda^{1+2\alpha} \, \lambda^{-n} \, \lambda^{-1} dz\,ds  \\
&= \underset{\atop{r>0}{x_0 \in \mathbb R^n}}{\sup} \,  \frac{\lambda^{-2}}{{(\lambda r)}^{-n-2 \alpha - 2}} \int_0 ^{ \lambda r} \!\!\int_{B(\lambda x_0, \,\lambda r)} | f \ast \Psi_s (z)|^2 \, s^{-1-2\alpha} \, dz\,ds  \\
&= \lambda ^{-2} \|f\|^2_{U_{\alpha}},
\end{split}
\end{equation}
which completes the proof of Proposition~\ref{prop1}.
\end{proof}
\medbreak 
\begin{lemma}\label{l}
Let \(\alpha>-1\). The Riesz transforms \(R_1,\ldots,R_n\) are bounded on \(U_\alpha\). Consequently, the Leray projector \(\mathbb P\) is bounded on \(U_\alpha(\mathbb R^n;\mathbb R^n)\).
\end{lemma}
\begin{proof}
It is enough to establish the result for \(R_1\). 
Let $\Psi$ be a function that defines $U_\alpha,$ and $\Psi_t:=t^{-n}\Psi(t^{-1}\cdot).$
Since \(R_1\) commutes with convolution, we have \(\Psi_t*(R_1f)=R_1(\Psi_tf).\)
Set \(g_t:=\Psi_tf.\)
Fix \(x_0\in\mathbb R^n\) and \(r>0\), and denote
\(B_r:=B(x_0,r),\) $$F_0:=B(x_0,2r)\ \hbox{ and, }\  \hbox{ for }\ j\geqslant 1,\
F_j:=B(x_0,2^{j+1}r)\setminus B(x_0,2^jr).$$
Writing
\[g_t=\sum_{j\geqslant0} 1_{F_j}g_t\]
and using Minkowski's inequality, we get
\[\left(
\int_0^r\int_{B_r}|R_1g_t(y)|^2
\,\frac{dy\,dt}{t^{1+2\alpha}}
\right)^{1/2}
\leqslant\sum_{j\geqslant0}
\left(\int_0^r\int_{B_r}\left|R_1( 1_{F_j}g_t)(y)\right|^2
\,\frac{dy\,dt}{t^{1+2\alpha}}\right)^{1/2}.\]
For \(j=0\), the \(L^2\)-boundedness of \(R_1\) gives
\[\left(\int_0^r\int_{B_r}\left|R_1( 1_{F_0}g_t)\right|^2
\,\frac{dy\,dt}{t^{1+2\alpha}}
\right)^{1/2}\lesssim\left(\int_0^r\int_{B(x_0,2r)}
|g_t|^2\,\frac{dy\,dt}{t^{1+2\alpha}}
\right)^{1/2}.\]
For \(j\geqslant1\), the kernel estimate for the Riesz transform and the Cauchy--Schwarz inequality yield
\[\left\|R_1( 1_{F_j}g_t)\right\|_{L^2(B_r)}\lesssim
2^{-jn/2}\|g_t\|_{L^2(F_j)}.\]
Therefore,
\[\left(\int_0^r\int_{B_r}|R_1g_t(y)|^2\,\frac{dy\,dt}{t^{1+2\alpha}}
\right)^{1/2}\lesssim\sum_{j\geqslant0}2^{-jn/2}
\left(\int_0^r\int_{B(x_0,2^{j+1}r)}|g_t(y)|^2
\,\frac{dy\,dt}{t^{1+2\alpha}}\right)^{1/2}.\]
Since \(r\leqslant2^{j+1}r\), the definition of \(U_\alpha\) implies
\[\begin{aligned}\left(
\int_0^r\int_{B_r}|R_1g_t(y)|^2
\,\frac{dy,dt}{t^{1+2\alpha}}
\right)^{1/2}
&\lesssim\|f\|_{U_\alpha}
\sum_{j\geqslant0}
2^{-jn/2}(2^{j+1}r)^{(n-2\alpha-2)/2}\\
&\lesssim
r^{(n-2\alpha-2)/2}
\|f\|_{U_\alpha}
\sum_{j\geqslant0}2^{-j(\alpha+1)}.\end{aligned}\]
The last series is convergent because \(\alpha>-1\). 
Then, dividing by $r^{(n-2\alpha-2)/2}$ and taking the supremum over \(x_0\) and \(r\) gives
\(\|R_1f\|_{U\alpha}\lesssim\|f\|_{U\alpha}.\)
The same argument applies to all Riesz transforms. Since
\(\mathbb P_{ij}=\delta_{ij}+R_iR_j,\)
the Leray projector is bounded on \(U_\alpha\).
\end{proof}
\medbreak
The following result allows us to justify the definition of the space $U_\alpha$ by completion.
\begin{prop}\label{p:density}
Let $\alpha\in(0,-1+n/2]$ and $f\in\cS(\R^n).$ Then, $\|f\|_{U_\alpha}<\infty.$
\end{prop}
\begin{proof}
Take any function $\Psi$ in $\cS(\R^n)$ with zero average and set $\Psi_t:=t^{-n}\Psi(t^{-1}\cdot)$ for $t>0.$
It is only a matter of bounding uniformly 
$$r^{2\alpha+2-n}\int_0^r\!\!\int_{B(x_0,r)} \bigl|(\Psi_t\star f)(y)|^2 t^{-1-2\alpha}\,dy\,dt,\quad r>0,\quad x_0\in\R^n.$$
    If $r\in(0,1)$ then we take advantage of the fact that
    \begin{equation}\label{eq:1}f\star\Psi_t(x)=\int_{\R^n} \bigl(f(x-tz)-f(z)\bigr)\Psi(z)\,dz,    \end{equation}
and thus
$$\|f\star\Psi_t\|_{L^\infty(\R^n)} \lesssim t\|\nabla f\|_{L^\infty(\R^n)}.$$
Plugging this in the above integrals, we easily get
$$\int_0^r\!\!\int_{B(x_0,r)} \bigl|(\Psi_t\star f)(y)|^2 t^{-1-2\alpha}\,dy\,dt\lesssim r^{n+2-2\alpha}\|\nabla f\|_{L^\infty(\R^n)}^2.$$
Hence, for $r\in(0,1)$ and $x_0\in\R^n,$ we have
$$r^{2\alpha+2-n}\int_0^r\!\!\int_{B(x_0,r)} \bigl|(\Psi_t\star f)(y)|^2 t^{-1-2\alpha}\,dy\,dt\lesssim r^4\|\nabla f\|_{L^\infty(\R^n)}^2\leq\|\nabla f\|_{L^\infty(\R^n)}^2.$$
If $r\geq1,$ then we split the outer integral into the intervals $[0,1]$ and $[1,r].$
To handle the first part, we use \eqref{eq:1} and the mean value formula which imply that
$$\Psi_t\star f(x)=t\int_0^1\!\!\int_{\R^n} \nabla f(x-\theta tz)\cdot z\:\Psi(z)\,dz\,d\theta,$$
from which we get
$$\|\Psi_t\star f\|_{L^2}\lesssim t\|\nabla f\|_{L^2}.$$
Hence, we have 
$$\int_0^1\!\!\int_{B(x_0,r)} \bigl|(\Psi_t\star f)(y)|^2 t^{-1-2\alpha}\,dy\,dt\lesssim
\|\nabla f\|_{L^2}^2 \int_0^1 t^{1-2\alpha}\,dt\lesssim \|\nabla f\|_{L^2}^2.$$
For the part on $[1,r],$ we just use the convolution inequality
$$\|f\star \Psi_t\|_{L^2}\leq \|f\|_{L^2}\|\Psi\|_{L^1}$$ to write that 
$$\int_1^r\!\!\int_{B(x_0,r)} \bigl|(\Psi_t\star f)(y)|^2 t^{-1-2\alpha}\,dy\,dt\lesssim 
\|f\|_{L^2}^2\int_1^r t^{-1-2\alpha}\,dt\lesssim 
\|f\|_{L^2}^2.$$
In the end, as $r^{2\alpha+2-n}\leq1$ for $r\geq1$ and $\alpha\leq -1+n/2,$ we conclude that
$$r^{2\alpha+2-n} \int_0^r\!\!\int_{B(x_0,r)} \bigl|(\Psi_t\star f)(y)|^2 t^{-1-2\alpha}\,dy\,dt\lesssim
\|f\|_{H^1}^2.$$
This completes the proof of the proposition.
\end{proof}

\smallbreak

\renewcommand{\refname}{References}


\begin{thebibliography}{99}

\bibitem{AP} H. Abidi and M. Paicu:  Existence globale pour un fluide inhomogène, 
\emph{Ann. Inst. Fourier}, {\bf 57}   no. 3, 883--917, (2007).

\bibitem{aus-1} P. Auscher and A. Axelsson: \emph{Remarks on maximal regularity.} Parabolic Problems: The Herbert Amann Festschrift (2011): 45--55.

\bibitem{aus} P. Auscher, D. Frey: On the well--posedness of parabolic equations of Navier--Stokes type with data in $\mathrm{BMO}^{-1}$, 
\emph{Journal of the Institute of Mathematics of Jussieu} {\bf 16} (5),  947--985 (2015).

\bibitem{aus1} P. Auscher, S. Dubois, and P. Tchamitchian: On the stability of global solutions to Navier–Stokes equations in the space, \emph{Journal de mathématiques pures et appliquées,} {\bf 83}(6), 673-697, (2004).

\bibitem{aus2} P. Auscher, S. Monniaux and P. Portal: The maximal regularity operator on tent spaces,  \emph{Communications on Pure and Applied Analysis,} {\bf 11}(6), 2213--2219 (2012).

\bibitem{notes}
P. Auscher and I. Vasilyev:  \emph{On the use of tent spaces for solving PDEs: a proof of the Koch-Tataru theorem.} Mathematical methods and modeling for mixtures of fluids and interface evolution, 1–27, Panor. Synthèses, {\bf 61}, Soc. Math. France, Paris, 2024.

\bibitem{ben1}
P. Auscher and H. Hou: 
\emph{On well-posedness and maximal regularity for parabolic Cauchy problems on weighted tent spaces}, to appear in Journal of Evolution Equations,
{\bf 25}(1), Paper No. 16, 37 pp (2025).

\bibitem{ben2}
P. Auscher and H. Hou: On well-posedness for parabolic Cauchy problems of Lions type with rough initial data, \emph{Math. Ann.}, {\bf 392}(3), 4221–4282 (2025).

\bibitem{BCD} H. Bahouri, J.-Y. Chemin and R. Danchin: \emph{Fourier Analysis and Nonlinear Partial Differential Equations}, 
Grundlehren der mathematischen Wissenschaften, {\bf 343}, Springer, 2011.


\bibitem{BP} J. Bourgain and N. Pavlović: Ill-posedness of the Navier-Stokes equations in a critical space in 3D,
 \emph{Journal of Functional Analysis}, {\bf 255} (9)  2233--2247 (2008). 

\bibitem{buck} 
T. Buckmaster and V. Vicol: Convex integration constructions in hydrodynamics, 
\emph{Bulletin of the American Mathematical Society,} {\bf 58}(1), 1--44 (2021).


\bibitem{check} A. Cheskidov, R. Shvydkoy: Ill-posedness for subcritical hyper dissipative Navier–Stokes equations in the largest critical spaces, J. Math. Phys. 53 (2012), Article ID 115620.

\bibitem{cms} R. R. Coifman, Y. Meyer and  E. M. Stein: Some new function spaces and their applications to harmonic analysis, \emph{Journal of Functional Analysis,} {\bf 62}(2), 304--335 (1985).

 
\bibitem{sibir} L. Cui and  Q. Yang:  On the generalized Morrey spaces, \emph{Siberian Math. J.}, {\bf 46},  133--141 (2005).

\bibitem{DX} G. Dafni and J. Xiao: Some new tent spaces and duality theorems for fractional Carleson measure and $Q_\alpha (\mathbb R^n)$,  \emph{Journal of Functional Analysis}, {\bf 208}(2),  377--422 (2004). 

 
\bibitem{dan1}
 R. Danchin: Density-dependent incompressible fluids in critical spaces, \emph{Proceedings of the Royal Society of Edinburgh}, 
 {\bf 133},  1311--1334 (2003).








 \bibitem{dan8}
R. Danchin and P.B. Mucha: A Lagrangian approach for the incompressible Navier-Stokes equations with variable density,
\emph{Communications on Pure and Applied Mathematics,} {\bf 65}(10), 1458--1480 (2012).



 \bibitem{dan12}
R. Danchin and P.B. Mucha: Critical functional framework and maximal regularity in action on systems of incompressible flows, 
\emph{Mémoires de la SMF,} {\bf 143}, 151 pages (2015).

 \bibitem{dan14}
R. Danchin and P.B. Mucha: The Navier-Stokes equations in vacuum, 
\emph{Communications on Pure and Applied Mathematics,}
{\bf 72}, 1351--1385 (2019).

\bibitem{EJPX} M. Ess\'en, S. Janson, L. Pengand and J. Xiao: 
$Q$-spaces of several real variables, \emph{Indiana Univ. Math. J.}, {\bf 49}, 575--615 (2000).

\bibitem{FK} H. Fujita and T. Kato: On the Navier-Stokes initial value problem I,
\emph{Archive for Rational Mechanics and Analysis}, {\bf 16},  269--315 (1964).

\bibitem{Haspot} B. Haspot:  Well-posedness for density-dependent incompressible fluids with non-Lipschitz velocity,
\emph{Ann. Inst. Fourier}, {\bf 62}, no. 5, 1717--1763 (2012). 

\bibitem{DeSimon}  
 L. de Simon: Un’applicazione della teoria degli integrali singolari allo studio delle equazioni differenziali lineari astratte del primo ordine, 
\emph{Rend. Sem. Mat. Univ. Padova,} {\bf 34}, 205--223 (1964).

\bibitem{germ}
P. Germain, N. Pavlović and G. Staffilani: Regularity of solutions to the Navier-Stokes equations evolving from small data in $\mathrm{BMO}^{-1},$ \emph{International Mathematics Research Notices,}  {\bf 21}, 35pp (2007).


\bibitem{HNSWZZ}  R. Hu, Q.-H. Nguyen, F. Shao, D. Wei, P. Zhang and Z. Zhang:
Solutions of the 3D inhomogeneous incompressible Navier--Stokes system with initial velocity in $VMO^{-1}$, 
arXiv:2606.20207.

\bibitem{kisnaz}
A. Kiselev and F. Nazarov: Variation on a theme of Caffarelli and Vasseur, \emph{J. Math. Sci.} {\bf 166}(1) (2010), 31--39.

\bibitem{kochtataru} H. Koch and D. Tataru: Well-posedness for
the Navier-Stokes equations, {\em  Advances in  Mathematics}, {\bf 157},  22--35 (2001).


\bibitem{Leray} J. Leray: Sur le mouvement d’un liquide visqueux remplissant l’espace, \emph{Acta
Mathematica}, {\bf 63}, 193--248 (1934).

\bibitem{LZ}  P. Li and Z. Zhai:  Well-posedness and regularity of generalized Navier–Stokes equations in some critical $Q$-spaces, \emph{Journal of Functional Analysis}, 
 {\bf 259} (10), 2457--2519 (2010). 

 \bibitem{liang2012new}
 Y. Liang, Y. Sawano,  T. Ullrich, D. Yang and W. Yuan:
 New characterizations of Besov--Triebel--Lizorkin-Hausdorff spaces including coorbits and wavelets,
  \emph{Journal of Fourier Analysis and Applications},
  {\bf 18}, 5, 1067--1111, (2012).

\bibitem{hernya} S. Liu, C. Zhang and P. Li: Harmonic functions with traces in $Q$ type spaces related to weights. 
\emph{Annals of Functional Analysis,}  {\bf  15}(3), 1-56 (2024).


\bibitem{PZZ}  M. Paicu, P. Zhang and Z. Zhang: Global unique solvability of inhomogeneous
Navier-Stokes equations with bounded density, \emph{Comm. Partial Differential
Equations}, {\bf 38},  1208--1234 (2013).



\bibitem{vasvin}
I. Vasilyev and F. Vigneron: Variation on a theme of Kiselev and Nazarov: H\"older estimates for nonlocal transport-diffusion, along a non divergence-free field, \emph{Journal of the Institute of Mathematics of Jussieu,} {\bf 21}(5) pp. 1651--1675 (2022).

\bibitem{WX} Y. Wang and J. Xiao: Well/ill-posedness for the dissipative Navier–Stokes system in generalized Carleson measure spaces, 
\emph{Adv. Nonlinear Anal.},  {\bf 8},  203--224 (2019). 

\bibitem{Xiao} J. Xiao: Homothetic variant of fractional Sobolev space with application to Navier–Stokes system, \emph{Dyn. Partial Differ. Equ.}, {\bf 4}, (2007), 227--245.
\end{thebibliography}
\end{document}